\begin{document}

\theoremstyle{plain}
\newtheorem{theorem}{Theorem}
\newtheorem{corollary}[theorem]{Corollary}
\newtheorem{lemma}[theorem]{Lemma}
\newtheorem{proposition}[theorem]{Proposition}

\theoremstyle{definition}
\newtheorem{defn}{Definition}
\newtheorem{example}[theorem]{Example}
\newtheorem{conjecture}[theorem]{Conjecture}
\newtheorem{question}[theorem]{Question}

\theoremstyle{remark}
\newtheorem{remark}[theorem]{Remark}

\begin{center}
\vskip 1cm{\LARGE\bf 
Generalization of a Result of Sylvester Regarding the Frobenius Coin Problem and an Elementary Proof of Eisenstein's Lemma for Jacobi Symbols  \\ 
\vskip .1in

}
\vskip 1cm
\large
Damanvir Singh Binner \\
Department of Mathematics\\
Simon Fraser University \\
Burnaby, BC V5A 1S6\\
Canada \\
 dbinner@sfu.ca
\end{center}

\vskip .2in

\begin{abstract}
In a recent work, the present author generalized a fundamental result of Gauss related to quadratic reciprocity, and also showed that the above result of Gauss is equivalent to a special case of a well-known result of Sylvester related to the Frobenius coin problem. In this note, we use this equivalence to show that the above generalization of the result of Gauss naturally leads to an interesting generalization of the result of Sylvester. To be precise, for given positive coprime integers $a$ and $b$, and for a family of values of $k$ in the interval $0 \leq k < (a-1)(b-1)$, we find the number of nonnegative integers $\leq k$ that can be expressed in the form $ax+by$ for nonnegative integers $x$ and $y$. We also give an elementary proof of Eisenstein's Lemma for Jacobi symbols using floor function sums. Our proof provides a natural straightforward generalization of the Gauss-Eisenstein proof of the law of quadratic reciprocity for Jacobi symbols.
 \end{abstract}
 
 \section{Introduction}
 \label{Intro}

Throughout this note, $\lfloor x \rfloor$ denotes the greatest integer less than or equal to $x$. Recall the following well-known results of Gauss and Sylvester.

\begin{theorem}[Gauss (1808)]
\label{Quad}
For distinct odd primes $p$ and $q$, $$\sum_{i=1}^{\frac{p-1}{2}}\Big \lfloor\frac{iq}{p} \Big \rfloor+ \sum_{i=1}^{\frac{q-1}{2}}\Big \lfloor\frac{ip}{q}\Big \rfloor =  \frac{(p-1)(q-1)}{4}. $$
\end{theorem}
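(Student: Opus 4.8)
The plan is to prove this identity by a lattice-point counting argument in a rectangle, the classical approach going back to Gauss. First I would fix the axis-aligned rectangle $R = \{(x,y) \in \mathbb{Z}^2 : 1 \le x \le \tfrac{p-1}{2},\ 1 \le y \le \tfrac{q-1}{2}\}$, which contains exactly $\tfrac{p-1}{2}\cdot\tfrac{q-1}{2} = \tfrac{(p-1)(q-1)}{4}$ lattice points, the right-hand side of the claimed identity. The strategy is then to split these lattice points according to their position relative to the diagonal line $\ell \colon py = qx$ and to count each part by a floor-function sum.

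Second, I would observe that no lattice point of $R$ lies on $\ell$: if $py = qx$ with $1 \le x \le \tfrac{p-1}{2}$, then $p \mid qx$, and since $\gcd(p,q)=1$ this forces $p \mid x$, impossible for $1 \le x < p$. Hence every lattice point of $R$ satisfies either $py < qx$ (strictly below $\ell$) or $py > qx$ (strictly above $\ell$), so the two regions partition the lattice points of $R$.

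Third, for the points below $\ell$ I would fix a column $x = i$ with $1 \le i \le \tfrac{p-1}{2}$; the lattice points $(i,y)$ with $y \ge 1$ and $py < qi$ are exactly those with $1 \le y \le \lfloor iq/p \rfloor$ (using that $iq/p$ is not an integer). One must check such $y$ automatically satisfy $y \le \tfrac{q-1}{2}$, which holds because $iq/p \le \tfrac{(p-1)q}{2p} < \tfrac{q}{2}$, so $\lfloor iq/p\rfloor \le \tfrac{q-1}{2}$. Summing over $i$ gives $\sum_{i=1}^{(p-1)/2} \lfloor iq/p \rfloor$ lattice points below $\ell$, and by the symmetric argument (rows in place of columns, with the roles of $p$ and $q$ exchanged) there are $\sum_{i=1}^{(q-1)/2} \lfloor ip/q \rfloor$ lattice points above $\ell$.

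Finally, adding the two counts recovers the total number of lattice points in $R$, which yields the identity. The only real subtlety, and the step I would be most careful about, is the boundary bookkeeping: confirming that $\ell$ misses every lattice point of $R$ (so that the partition into "above" and "below" is exhaustive, and the floors, rather than floors $\pm 1$, give the exact column counts) and that each per-column count never exceeds $\tfrac{q-1}{2}$, so no points outside $R$ are inadvertently included.
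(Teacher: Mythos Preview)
Your argument is correct and is precisely Eisenstein's classical lattice-point proof: the rectangle $R$ has $\tfrac{(p-1)(q-1)}{4}$ lattice points, the diagonal $py=qx$ contains none of them because $\gcd(p,q)=1$, and counting by columns below and by rows above yields the two floor sums. The boundary checks you flag are exactly the ones needed, and you handle them properly.

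The paper, however, does not prove Theorem~\ref{Quad} in this way. It quotes Theorem~\ref{Quad} as a classical result and instead establishes the more general Theorem~\ref{Strong} (valid for arbitrary coprime positive integers $a,b$, of any parity) as a consequence of Theorem~\ref{GenReci}, which was proved in the author's earlier work. Concretely, the paper sets $d=\lfloor a/2\rfloor$ in Theorem~\ref{GenReci}, computes $K=\lfloor bd/a\rfloor$ in three parity cases, and reads off the identity. Theorem~\ref{Quad} is then the special case where $a,b$ are distinct odd primes. So where you give a direct, self-contained geometric count tailored to the odd-prime setting, the paper's route is algebraic and deductive, pulling the result down from a broader reciprocity-type identity; your argument is more elementary and immediate, while the paper's approach situates the statement inside a hierarchy of equivalences (Figure~\ref{fig:Equival}) that it then exploits to generalize Sylvester's theorem.
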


\begin{theorem}[Sylvester (1882)]
\label{Sylvester}
 If $a$ and $b$ are coprime numbers, the number of natural numbers that cannot be expressed in the form $ax + by$  for nonnegative integers $x$ and $y$ is equal to $\frac{(a-1)(b-1)}{2}$.
\end{theorem}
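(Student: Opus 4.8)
The plan is to prove Theorem~\ref{Sylvester} by the classical complementary symmetry of the non-representable integers about the Frobenius number $F:=ab-a-b$, and then to record the floor-sum reformulation that ties it to Theorem~\ref{Quad}. One may assume $a,b\ge 2$, since if (say) $b=1$ then every nonnegative integer has the form $ax+by$ and the count is $0=\tfrac{(a-1)(b-1)}{2}$. Because $\gcd(a,b)=1$, every integer $n$ has a unique \emph{canonical representation} $n=ax+by$ with $0\le x\le b-1$ and $y\in\mathbb{Z}$ (take $x\equiv a^{-1}n\pmod b$, then $y=(n-ax)/b$). The fact I would use repeatedly is that $n$ is \emph{representable} --- meaning $n=ax+by$ for some integers $x,y\ge 0$ --- if and only if the $y$ of its canonical representation is $\ge 0$: one direction is immediate, and conversely, given any representation $n=ax'+by'$ with $x',y'\ge 0$, we have $x'\equiv x\pmod b$ and $x'\ge 0$, so (as $0\le x\le b-1$) $x'=x+bt$ with $t\ge 0$, whence the canonical $y=y'+at\ge 0$.

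The heart of the argument is the symmetry: \emph{for every integer $n$, exactly one of $n$ and $F-n$ is representable}. Not both, because adding nonnegative representations of $n$ and $F-n$ would produce one of $F$, while $F=ab-a-b$ is not representable --- if $ab-a-b=ax+by$ with $x,y\ge 0$, then $a(b-1-x)=b(y+1)>0$, forcing $b\mid b-1-x$ with $1\le b-1-x\le b-1$, which is impossible. At least one, because if $n=ax+by$ is the canonical representation and $n$ is not representable, then $y\le -1$, and then $F-n=a(b-1-x)+b(-1-y)$ exhibits $F-n$ as a nonnegative combination, since $0\le b-1-x\le b-1$ and $-1-y\ge 0$. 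Finally, $F$ is odd (a quick check of the cases ``$a,b$ both odd'' and ``exactly one of $a,b$ even''), so $n\mapsto F-n$ is a fixed-point-free involution of $\{0,1,\dots,F\}$.

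To conclude, I would count. If $n>F$, then $F-n<0$ is not representable, so the symmetry forces $n$ to be representable; and no negative integer is representable. Hence every non-representable natural number lies in $\{0,1,\dots,F\}$, and there the involution $n\mapsto F-n$ partitions this set into $\tfrac{F+1}{2}=\tfrac{(a-1)(b-1)}{2}$ two-element orbits, each containing exactly one non-representable integer. Therefore exactly $\tfrac{(a-1)(b-1)}{2}$ natural numbers cannot be written as $ax+by$.

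The only genuinely delicate point is the ``at least one'' half of the symmetry: non-representability of $n$ only tells us that \emph{some} chosen representation has a negative coordinate, so one really must pass to the canonical representation in order to conclude $y\le -1$; the rest is bookkeeping. I would also note an alternative proof closer in spirit to this note: for each residue $r$ modulo $a$, the non-representable integers congruent to $r$ are $r,\,r+a,\,\dots,\,by_r-a$, where $y_r\in\{0,1,\dots,a-1\}$ is determined by $by_r\equiv r\pmod a$, so their number is $\lfloor by_r/a\rfloor$; summing over $r$ (equivalently, over $y_r=0,1,\dots,a-1$) gives $\sum_{j=1}^{a-1}\lfloor jb/a\rfloor$, which equals $\tfrac{(a-1)(b-1)}{2}$ by the pairing $j\leftrightarrow a-j$. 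This floor-sum identity is precisely the ``full'' analogue of the half-sum in Theorem~\ref{Quad}, which is the bridge exploited throughout the paper.
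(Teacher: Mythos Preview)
Your proof is correct. Both the main argument (the involution $n\mapsto F-n$ on $\{0,\dots,F\}$ with the canonical representation lemma) and the alternative residue-class count leading to $\sum_{j=1}^{a-1}\lfloor jb/a\rfloor=\tfrac{(a-1)(b-1)}{2}$ are complete and accurate.

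However, your approach is genuinely different from the paper's. The paper does not prove Theorem~\ref{Sylvester} directly; instead it first proves Theorem~\ref{Strong} (the half-range floor-sum identity) as a specialization of Theorem~\ref{GenReci}, and then in Section~\ref{Equiv} establishes the \emph{equivalence} of Theorems~\ref{Sylvester} and~\ref{Strong} by counting nonnegative solutions of $ax+by+z=a\lfloor b/2\rfloor+b\lfloor a/2\rfloor$ in two ways (Lemmas~\ref{EqnSum'}--\ref{SolnOther'}), obtaining the identity
\[
N_0+2\Bigl(\sum_{i=1}^{\lfloor a/2\rfloor}\lfloor ib/a\rfloor+\sum_{i=1}^{\lfloor b/2\rfloor}\lfloor ia/b\rfloor\Bigr)=\tfrac{(a-1)(b-1)}{2}+2\lfloor a/2\rfloor\lfloor b/2\rfloor,
\]
from which Sylvester's theorem follows once Theorem~\ref{Strong} is in hand. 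Your involution argument is shorter and entirely self-contained, but it does not exhibit the structural link to the Gauss-type reciprocity that is the paper's main point. Your alternative proof is closer in spirit: it reduces Sylvester's count to the \emph{full}-range sum $\sum_{j=1}^{a-1}\lfloor jb/a\rfloor$ and evaluates it by the pairing $j\leftrightarrow a-j$, whereas the paper works with the two \emph{half}-range sums and their reciprocity, precisely because that is the form needed for the equivalence with Theorem~\ref{Quad}/Theorem~\ref{Strong}.
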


\begin{remark}
\label{odd}
Theorem \ref{Quad} and its proof hold verbatim for any odd positive coprime integers $a$ and $b$.
\end{remark}

In this note, the following special case of Theorem \ref{Sylvester} will be of particular interest.

\begin{theorem}
\label{Sylvester'}
 If $p$ and $q$ are distinct odd prime numbers, the number of natural numbers 
 that cannot be expressed in the form $px + qy$  for nonnegative integers $x$ and $y$ is equal to $\frac{(p-1)(q-1)}{2}$. 
\end{theorem}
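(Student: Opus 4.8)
The statement is the special case $a=p$, $b=q$ of Theorem~\ref{Sylvester}, so one proof is simply to invoke that theorem; but since the theme of this note is the interplay with Gauss's identity, the plan is to give a direct argument that converts the count into a floor‑function sum and then evaluates it in a way that makes the equivalence with Theorem~\ref{Quad} transparent. First I would fix, for each integer $n$, its unique representation $n=px+qy$ with $x\in\{0,1,\dots,q-1\}$ and $y\in\mathbb{Z}$ (take $x$ to be the residue of $p^{-1}n$ modulo $q$, which exists because $\gcd(p,q)=1$). The key elementary observation is that $n$ \emph{fails} to be of the form $px'+qy'$ with $x',y'\ge 0$ precisely when $y<0$: any nonnegative representation has $x'=x+qt$ and $y'=y-pt$ for some integer $t\ge 0$, hence $y'\le y$; and conversely $(x,y)$ itself works as soon as $y\ge 0$.

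Next I would organize the non-representable positive integers according to the value of $x$. For $x=0$ there are none, since then $n=qy<0$. For each fixed $x\in\{1,\dots,q-1\}$, the non-representable positive integers with that $x$ are exactly the $n=px+qy$ with $y<0$ and $px+qy>0$, i.e.\ with $-px/q<y<0$; since $\gcd(p,q)=1$ and $0<x<q$ force $q\nmid px$, the number of such integers $y$ is exactly $\lfloor px/q\rfloor$. Because the representation $(x,y)$ is unique, these families are disjoint and exhaust all non-representable positive integers, so their number equals $\sum_{x=1}^{q-1}\lfloor px/q\rfloor$ (equivalently, the number of lattice points strictly below the segment joining $(0,0)$ and $(q,p)$ with $0<x<q$).

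It then remains to show $\sum_{x=1}^{q-1}\lfloor px/q\rfloor=\frac{(p-1)(q-1)}{2}$. I would do this by the pairing $x\leftrightarrow q-x$, using $\lfloor px/q\rfloor+\lfloor p(q-x)/q\rfloor=p-1$ (valid since $px/q\notin\mathbb{Z}$), which collapses the sum to $\tfrac{q-1}{2}(p-1)$; geometrically this is nothing but the $180^\circ$ rotational symmetry of the lattice rectangle $\{1,\dots,q-1\}\times\{1,\dots,p-1\}$ about its centre, which lies on the segment above. To connect this with Theorem~\ref{Quad}, one may instead split the sum at the midpoint, write it as $\sum_{x=1}^{(q-1)/2}\lfloor px/q\rfloor$ plus its complement, and note that Gauss's identity evaluates the first half in combination with the analogous sum for $q/p$ — so one could equally cite Theorem~\ref{Quad} for that half and the pairing above for the rest. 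This is precisely the equivalence between the two theorems referred to in the abstract.

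The only place where care is needed is the bookkeeping in the first two paragraphs: verifying that every positive integer occurs exactly once among the pairs $(x,y)$, that $x=0$ genuinely contributes nothing, and that $px/q$ is never an integer, so that the per-$x$ count is exactly $\lfloor px/q\rfloor$ and not one more. Everything after that is a routine manipulation of floor functions, and the argument goes through verbatim for any odd coprime $a,b$ (by Remark~\ref{odd}) and in fact for all coprime $a,b$, since the symmetry step uses no parity hypothesis.
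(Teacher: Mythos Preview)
Your argument is correct. The unique-representation step, the per-$x$ count $\lfloor px/q\rfloor$, and the pairing $x\leftrightarrow q-x$ are all sound, and the bookkeeping caveats you flag are exactly the right ones.

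The paper, however, does not give a standalone proof of this statement: Theorem~\ref{Sylvester'} is simply recorded as the special case of Theorem~\ref{Sylvester}, and the latter is established only indirectly in Section~\ref{Equiv}, by proving Theorem~\ref{Strong} (via Theorem~\ref{GenReci}) and then showing that Theorems~\ref{Sylvester} and~\ref{Strong} are equivalent through a comparison of two counts of nonnegative solutions to $ax+by+z=a\lfloor b/2\rfloor+b\lfloor a/2\rfloor$ (Lemmas~\ref{EqnSum'}--\ref{SolnOther'}). So your route --- fix the residue of $x$ mod $q$, reduce non-representability to $y<0$, sum $\lfloor px/q\rfloor$, and collapse by the $x\mapsto q-x$ symmetry --- is genuinely different from the paper's machinery. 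What your approach buys is a short, self-contained proof that needs nothing beyond the division algorithm; what the paper's equivalence framework buys is that the same template (compare two solution counts for $ax+by+z=\text{const}$) scales up to yield Theorem~\ref{Best}, the main new result. Your closing remark about splitting the sum at the midpoint to connect with Theorem~\ref{Quad} is in the spirit of the paper's Equivalence~A, but the paper defers that equivalence to the earlier work~\cite{Binner} rather than redoing it here.
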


We refer to Theorem \ref{Sylvester} as \emph{Sylvester's Theorem} and to Theorem \ref{Sylvester'} as \emph{Special Case of Sylvester's Theorem}.

Gauss \cite{Gauss} proved Theorem \ref{Quad} in $1808$, and this completed his third proof of the law of quadratic reciprocity. Eisenstein \cite{Eisenstein} gave a geometric proof of Theorem \ref{Quad} in $1844$. We refer the reader to Baumgart \cite[pp.\ $15$--$20$]{Classical} for more information about these classical proofs. Sylvester \cite{Sylvester82} proved Theorem \ref{Sylvester} in $1882$. In $1883$, he posed it as a recreational problem and Curran \cite{Sylvester} published a short proof based on generating functions. Several more contemporary proofs of Sylvester's Theorem are known \cite[Section 5.1]{alfonsin}.

 The present author \cite[Lemma 7]{Binner} generalized Theorem \ref{Quad} as follows.

  \begin{theorem}
  \label{GenReci}
  Let $a$, $b$, $d$, and $K$ be positive integers such that $b < a$, $d < a$, $\gcd(a,b)  = 1$, and $K = \Big \lfloor\frac{bd}{a} \Big\rfloor$. Then,  $$\sum_{i=1}^{d} \Big \lfloor\frac{ib}{a} \Big \rfloor+ \sum_{i=1}^{K} \Big \lfloor\frac{ia}{b} \Big \rfloor= dK.$$ 
  \end{theorem}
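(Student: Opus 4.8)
The plan is to prove Theorem~\ref{GenReci} by a lattice-point count that directly generalizes Eisenstein's geometric proof of Theorem~\ref{Quad}. First I would record three elementary inequalities that follow from the hypotheses $d < a$ and $K = \lfloor bd/a \rfloor$: that $\lfloor ib/a \rfloor \le K$ for $1 \le i \le d$ (since $ib/a \le db/a$); that $\lfloor ia/b \rfloor \le d$ for $1 \le i \le K$ (since $ia/b \le Ka/b \le d$); and that $K < b$ (indeed $bd/a < b$ since $d < a$). These ensure that the sides of the rectangle introduced below will not clip any region I need to count, and that a certain divisibility never occurs.

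Next I would consider the set $S$ of lattice points $(x,y)$ with $1 \le x \le d$ and $1 \le y \le K$, so that $|S| = dK$, the right-hand side of the identity. I split $S$ according to position relative to the line $\ell \colon ay = bx$. No point of $S$ lies on $\ell$: the equation $ay = bx$ together with $\gcd(a,b) = 1$ forces $a \mid x$, which is impossible when $1 \le x \le d < a$. Hence $|S|$ is the number of points of $S$ lying strictly below $\ell$ plus the number lying strictly above it. For a fixed $x \in \{1,\dots,d\}$, lying below $\ell$ means $ay < bx$, i.e.\ $y < bx/a$, and since $a \nmid bx$ this is equivalent to $y \le \lfloor bx/a \rfloor$; by the first inequality above, the extra constraint $y \le K$ holds automatically, so $x$ contributes exactly $\lfloor bx/a \rfloor$ points. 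Summing over $x$ gives $\sum_{i=1}^{d}\lfloor ib/a \rfloor$.

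Symmetrically, for a fixed $y \in \{1,\dots,K\}$, lying above $\ell$ means $bx < ay$, i.e.\ $x < ay/b$. Here the inequality $K < b$ is what I would use: since $1 \le y \le K < b$ we have $b \nmid ay$, so the number of positive integers $x$ with $x < ay/b$ is exactly $\lfloor ay/b \rfloor$, and by the second inequality the constraint $x \le d$ is free. Thus $y$ contributes $\lfloor ay/b \rfloor$ points, and summing over $y$ gives $\sum_{i=1}^{K}\lfloor ia/b \rfloor$; adding the two contributions proves the identity. The only genuine subtlety — and the step I would treat most carefully — is confirming that neither side of the rectangle clips a counted region and that $b \nmid ay$ throughout $1 \le y \le K$; this is exactly what the three preliminary inequalities supply, and it is here that the precise value $K = \lfloor bd/a \rfloor$, as opposed to some looser bound, does the work. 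As a sanity check, taking $a = p$, $b = q$, $d = \tfrac{p-1}{2}$ gives $K = \tfrac{q-1}{2}$ and recovers Theorem~\ref{Quad}.
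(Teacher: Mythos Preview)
Your argument is correct. The three preliminary inequalities are all valid, the key non-incidence observation $a\nmid bx$ (for $1\le x\le d$) and $b\nmid ay$ (for $1\le y\le K<b$) holds exactly as you say, and the two column/row counts therefore give the two sums without any clipping. The partition of $S$ into points strictly below and strictly above $\ell$ then yields $dK$ on the nose.

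As for the comparison: this paper does not actually prove Theorem~\ref{GenReci}; it simply quotes the result from the author's earlier article \cite[Lemma~7]{Binner} and then \emph{uses} it (e.g.\ in the proof of Theorem~\ref{Strong} and in Lemma~\ref{GeN}). So there is no in-paper proof to set yours against. Your lattice-point argument is the natural generalization of Eisenstein's geometric proof of Theorem~\ref{Quad}, and it is self-contained and short; the only place the specific choice $K=\lfloor bd/a\rfloor$ is genuinely needed is exactly where you flag it, namely to guarantee $\lfloor bx/a\rfloor\le K$ and $\lfloor ay/b\rfloor\le d$ so that the rectangle's boundaries never truncate a column or row count. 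One tiny remark: you could note that in fact $aK<bd$ strictly (equality would force $a\mid d$), so $\lfloor ay/b\rfloor\le d-1$; this is not needed for the proof but sharpens the picture.
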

  
The present author \cite[Section 3]{Binner} proved that the Special Case of Sylvester's Theorem is equivalent to the reciprocity relation of Gauss in Theorem \ref{Quad}. As described below, the importance of this equivalence is demonstrated by the fact that we are able to generalize Sylvester's Theorem using our generalization of the Gauss' result (Theorem \ref{GenReci}). Prior to that, we choose an appropriate special case of Theorem \ref{GenReci} that is equivalent to Sylvester's Theorem. It turns out that the following special case of Theorem \ref{GenReci} is precisely what we require.

\begin{theorem}
\label{Strong}
Let $a$ and $b$ be positive coprime integers. Then,  $$\sum_{i=1}^{\left \lfloor\frac{a}{2} \right\rfloor} \Big \lfloor\frac{ib}{a} \Big \rfloor+ \sum_{i=1}^{\left \lfloor\frac{b}{2} \right\rfloor} \Big \lfloor\frac{ia}{b} \Big \rfloor= \Big \lfloor \frac{a}{2} \Big \rfloor \Big \lfloor \frac{b}{2} \Big \rfloor.$$ 
\end{theorem}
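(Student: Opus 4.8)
The plan is to derive Theorem~\ref{Strong} from Theorem~\ref{GenReci} by taking $d = \lfloor a/2 \rfloor$. Since the asserted identity is symmetric in $a$ and $b$, I may assume $a \ge b$; if $a = b$ then coprimality forces $a = b = 1$ and both sides equal $0$, so I take $a > b \ge 1$, whence $a \ge 2$. Set $d := \lfloor a/2 \rfloor$ and $K := \lfloor bd/a \rfloor$. Then $d$ is a positive integer with $d < a$, while $\gcd(a,b) = 1$ and $b < a$, so every hypothesis of Theorem~\ref{GenReci} holds as soon as $K \ge 1$.

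First I would dispose of the degenerate case $K = 0$, i.e.\ $b\lfloor a/2\rfloor < a$. A quick check of the two parities of $a$ shows this forces $b = 1$ or else $a$ odd with $b = 2$; in both cases $\lfloor b/2\rfloor \le 1$ and Theorem~\ref{Strong} collapses: the outer sum vanishes because $ib < a$ for all $1 \le i \le \lfloor a/2\rfloor$, and the inner sum equals $\lfloor a/2\rfloor\lfloor b/2\rfloor$ by inspection (it is empty when $b = 1$ and equals its single term $\lfloor a/2\rfloor$ when $b = 2$). So from now on $K \ge 1$, and Theorem~\ref{GenReci} gives
\[
\sum_{i=1}^{d}\Big\lfloor\frac{ib}{a}\Big\rfloor + \sum_{i=1}^{K}\Big\lfloor\frac{ia}{b}\Big\rfloor = dK.
\]

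It then remains to identify $K = \lfloor bd/a\rfloor$ in terms of $\lfloor b/2\rfloor$, and here the parities of $a$ and $b$ intervene. Writing $a = 2d$ or $a = 2d+1$ and using $0 < b < a$, I expect the following: if $a$ is even, or if $a$ and $b$ are both odd, then $K = \lfloor b/2\rfloor$, so the displayed identity is exactly Theorem~\ref{Strong}; and if $a$ is odd and $b$ is even, then $bd/a = b/2 - b/(2a)$ with $b/2 \in \mathbb{Z}$ and $0 < b/(2a) < 1/2$, so $K = \lfloor b/2\rfloor - 1$. In this last case I would append the term $i = \lfloor b/2\rfloor = K+1$ to the inner sum: since $b$ is even, $\lfloor \lfloor b/2\rfloor \cdot a/b\rfloor = \lfloor a/2\rfloor = d$, so both sides of the displayed identity increase by exactly $d$, yielding $\sum_{i=1}^{d}\lfloor ib/a\rfloor + \sum_{i=1}^{\lfloor b/2\rfloor}\lfloor ia/b\rfloor = d(K+1) = \lfloor a/2\rfloor\lfloor b/2\rfloor$, which is again Theorem~\ref{Strong}. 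The single substantive step is the one invocation of Theorem~\ref{GenReci}; the main obstacle is purely bookkeeping, namely pinning down $\lfloor bd/a\rfloor$ according to the parities of $a$ and $b$ and not overlooking the fringe cases where $K = 0$ must be handled by hand.
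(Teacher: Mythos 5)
Your proposal is correct and follows essentially the same route as the paper: specialize Theorem~\ref{GenReci} to $d=\lfloor a/2\rfloor$, identify $K=\lfloor bd/a\rfloor$ by a three-way parity analysis, and in the case $a$ odd, $b$ even absorb the missing term $\lfloor\, \lfloor b/2\rfloor a/b\,\rfloor=\lfloor a/2\rfloor$ into both sides. Your explicit treatment of the degenerate case $K=0$ (where Theorem~\ref{GenReci}'s hypothesis that $K$ be a positive integer fails) is a small point of extra care that the paper's proof passes over silently.
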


In Section \ref{Equiv}, we prove Theorem \ref{Strong} and also show that it is equivalent to Theorem \ref{Sylvester}. Note that Theorem \ref{GenReci} is a generalization of Theorem \ref{Strong}. Thus, it is natural to wonder whether the equivalence leads to a generalization of Theorem \ref{Sylvester}, that is equivalent to Theorem \ref{GenReci}. This is in fact true and leads to an interesting result (Theorem \ref{Best}). We make this sequence of equivalence of results more clear in Figure \ref{fig:Equival} below.

\begin{figure}[H]
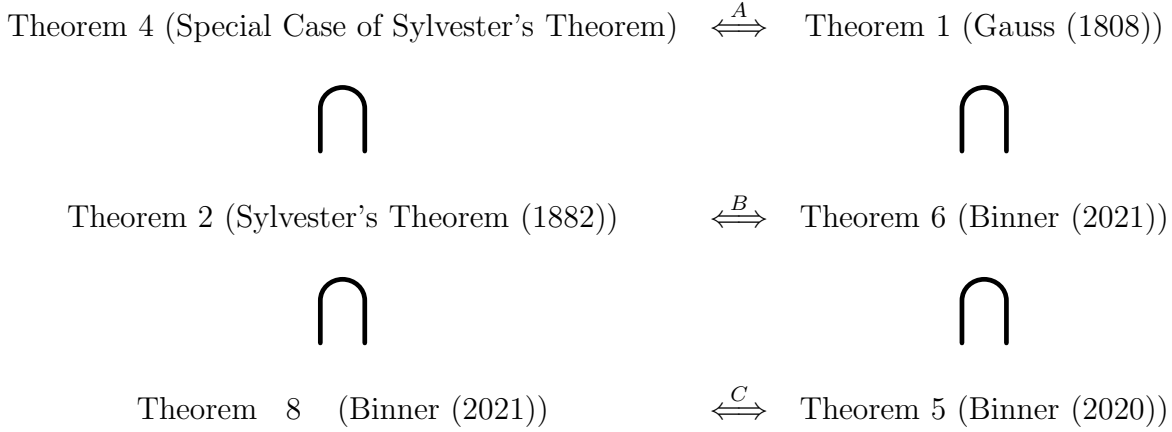

\begin{center}
	\begin{tabular}{ccc}
	\text{Theorem \ref{Sylvester'} (Special Case of Sylvester's Theorem}) & $\stackrel{A}{\Longleftrightarrow}$ & \text{Theorem} \ref{Quad} (\text{Gauss (1808)})\\
	\\
	$\scaleto{\bigcap}{5ex}$ & & $\scaleto{\bigcap}{5ex}$\\
	\\
	\text{Theorem \ref{Sylvester} (Sylvester's Theorem (1882)}) & $\stackrel{B}{\Longleftrightarrow}$  & \text{Theorem} \ref{Strong} (\text{Binner (2021)})\\
	\\
        $\scaleto{\bigcap}{5ex}$ & & $\scaleto{\bigcap}{5ex}$\\
        \\
        \text{Theorem} \hspace{.1cm} \ref{Best} \hspace{.2cm} (\text{Binner (2021)}) & $\stackrel{C}{\Longleftrightarrow}$ & \text{Theorem}  \ref{GenReci} (\text{Binner (2020)})
\end{tabular}
\end{center}
\caption{By Theorem X $\subset$ Theorem Y, we mean that Theorem X is a special case of Theorem Y, and by Theorem X $\Leftrightarrow$ Theorem Y, we mean that Theorems X and Y are equivalent. The present author \cite[Section 2.3]{Binner} proved Equivalence A in $2020$. We prove Equivalences B and C in Sections \ref{Equiv} and \ref{Sec4}, respectively.}
\label{fig:Equival}
\end{figure}
  
 Sylvester's Theorem shows that exactly half of nonnegative integers till the Frobenius number $ab-a-b$ can be expressed in the form $ax+by$. It is natural to ask a more general question.

\begin{question}
\label{NewSyl}
For given positive coprime integers $a$ and $b$, and given $k$ such that $0 \leq k < (a-1)(b-1)$, find the number of nonnegative integers $\leq k$ that can be expressed in the form $ax+by$ for nonnegative integers $x$ and $y$.
\end{question}

We denote this number by $N_0(a,b;k)$. For $k < 0$, we define $N_0(a,b;k)=0$. By Sylvester's Theorem, $N_0(a,b;ab-a-b) = \frac{(a-1)(b-1)}{2}$. In this note, we answer Question \ref{NewSyl} for a specific family of values of $k$ by proving Theorem \ref{Best}.

\begin{theorem}
\label{Best}
Let $a$ and $b$ be positive coprime integers with $b < a$. Further, let $0 < \alpha < a$ be such that $\alpha \equiv a$ (mod $2$), and $\beta = 2 \left \lfloor \frac{b(\alpha +a)}{2a} \right \rfloor - b $. Then, $$ N_0 \left(a,b;\frac{b\alpha + a\beta}{2}\right) = \frac{(\alpha +1)(\beta+1)}{2}. $$ 
\end{theorem}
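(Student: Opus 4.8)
The plan is to express $N_0(a,b;k)$ as a floor-function sum, pin down the range of that sum using the precise shape of $k$ and $\beta$, and then evaluate it; the resulting identity turns out to be equivalent to Theorem~\ref{GenReci}, so that this argument simultaneously proves Theorem~\ref{Best} and Equivalence~C of Figure~\ref{fig:Equival}. To begin, I would set $d=\frac{\alpha+a}{2}$, which is a positive integer $<a$ since $0<\alpha<a$ and $\alpha\equiv a\pmod 2$, and $K=\big\lfloor\frac{bd}{a}\big\rfloor$, so that $\beta=2K-b$, $\alpha=2d-a$, and $k=\frac{b\alpha+a\beta}{2}=bd+aK-ab$. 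A short check shows $\beta\ge-1$; when $\beta=-1$ one has $k<0$ and both sides are $0$, so I would then assume $\beta\ge 0$, which forces $1\le b-K\le K\le b-1$.

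The first real step is the combinatorial reduction. Since $\gcd(a,b)=1$, any two representations of the same integer $n=ax+by$ with $x,y\ge 0$ have their $x$-coordinates differing by a multiple of $b$, so if $n<ab$ then $n$ has at most one such representation. As $k=bd+aK-ab<ab$ (because $d<a$ and $K<b$), this yields
$$ N_0(a,b;k)=\#\{(x,y)\in\mathbb{Z}_{\geq 0}^2:ax+by\le k\}=\sum_{x=0}^{\lfloor k/a\rfloor}\Big(\Big\lfloor\tfrac{k-ax}{b}\Big\rfloor+1\Big). $$
The key observation is then that $\lfloor k/a\rfloor=\beta$: indeed $k/a=\frac{bd}{a}+K-b$, and $\frac{bd}{a}\notin\mathbb{Z}$ since $\gcd(a,b)=1$ and $0<d<a$, so its integer part equals $2K-b=\beta$.

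Next I would simplify the sum. Writing $k-ax=bd+a(K-b-x)$ and pulling the integer $d$ out of each floor turns it into $(\beta+1)(d+1)+\sum_{x=0}^{\beta}\big\lfloor\frac{a(K-b-x)}{b}\big\rfloor$; the substitution $m=b-K+x$ then gives $(\beta+1)(d+1)+\sum_{m=b-K}^{K}\lfloor-am/b\rfloor$, and since $1\le m\le b-1$ throughout (so $b\nmid am$) I may replace $\lfloor-am/b\rfloor$ by $-\lfloor am/b\rfloor-1$, arriving at
$$ N_0(a,b;k)=(\beta+1)\,d-\sum_{m=b-K}^{K}\Big\lfloor\tfrac{am}{b}\Big\rfloor. $$
So Theorem~\ref{Best} reduces to the identity $\sum_{m=b-K}^{K}\lfloor am/b\rfloor=\frac{(a-1)(\beta+1)}{2}$: substituting this back and using $2d-a=\alpha$ gives $N_0(a,b;k)=(\beta+1)\big(d-\frac{a-1}{2}\big)=\frac{(\alpha+1)(\beta+1)}{2}$, as desired.

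Finally, the identity $\sum_{m=b-K}^{K}\lfloor am/b\rfloor=\frac{(a-1)(\beta+1)}{2}$ follows at once from the reflection formula $\lfloor am/b\rfloor+\lfloor a(b-m)/b\rfloor=a-1$ (valid for $1\le m\le b-1$ when $\gcd(a,b)=1$), because $\{b-K,\dots,K\}$ has $\beta+1$ elements and is invariant under $m\mapsto b-m$. Alternatively---and this is the route that exhibits Equivalence~C---one splits $\sum_{m=b-K}^{K}=\sum_{m=1}^{K}-\sum_{m=1}^{b-K-1}$ and applies Theorem~\ref{GenReci} to each piece (once with parameter $d$, once with $a-d$, using $b-K-1=\lfloor b(a-d)/a\rfloor$) together with the classical evaluation $\sum_{m=1}^{b-1}\lfloor am/b\rfloor=\frac{(a-1)(b-1)}{2}$; reversing the whole chain of steps then recovers Theorem~\ref{GenReci} from Theorem~\ref{Best}. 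I expect the only genuinely delicate point to be the second paragraph---establishing that the summation range is exactly $\{0,1,\dots,\beta\}$, which is what makes the whole expression collapse---along with the bookkeeping of the parity constraints and the degenerate case $\beta=-1$; everything after that is routine floor-function algebra.
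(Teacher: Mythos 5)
Your proof is correct, and it takes a genuinely different route from the paper. The paper proves Theorem \ref{Best} by double-counting the nonnegative solutions of the three-variable equation $ax+by+z=bd+aK$: Lemma \ref{GiN} counts them via the periodicity $N(a,b;n+ab)=N(a,b;n)+1$ and the set of nonrepresentable numbers, Lemma \ref{GeN} counts them via floor sums, and comparing the two yields Lemma \ref{Best1}, an identity in which Theorem \ref{Best} and Theorem \ref{GenReci} are visibly equivalent; Theorem \ref{GenReci} then finishes the proof. You instead count lattice points directly, writing $N_0(a,b;k)=\sum_{x=0}^{\beta}\bigl(\lfloor (k-ax)/b\rfloor+1\bigr)$ (justified by uniqueness of representations below $ab$), collapsing this to $(\beta+1)d-\sum_{m=b-K}^{K}\lfloor am/b\rfloor$, and evaluating the remaining sum by the reflection identity $\lfloor am/b\rfloor+\lfloor a(b-m)/b\rfloor=a-1$ over a range that is symmetric under $m\mapsto b-m$; I checked the key steps (the identification $\lfloor k/a\rfloor=\beta$, the change of variable, the handling of the central term when $\beta$ is even, and the degenerate case $\beta=-1$) and they all hold. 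What your approach buys is a shorter, self-contained, and more elementary proof that does not invoke Theorem \ref{GenReci} or the solution-counting machinery of \cite{Binner} at all; what it gives up is that Equivalence C no longer falls out automatically --- your closing sketch of how to recover it by splitting $\sum_{m=b-K}^{K}$ and applying Theorem \ref{GenReci} twice is plausible but is the least detailed part of the argument, whereas in the paper the equivalence is the main point and is obtained for free from Lemma \ref{Best1}.
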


Setting $\alpha =a-2$ in the above theorem gives Theorem \ref{Sylvester}. 
Note that $$ \beta \geq 2\left \lfloor \frac{b}{2}  \right \rfloor - b \geq -1. $$ Thus, for $\frac{b\alpha + a\beta}{2} < 0$, it must be true that $\beta = -1$, and then by Theorem \ref{Best}, $ N_0 \left(a,b;\frac{b\alpha + a\beta}{2}\right) = 0$, which is consistent with our definition of $N_0(a,b;k)$ for negative values of $k$.

We demonstrate Theorem \ref{Best} for an example. Suppose that $a=29$ and $b=23$. Then Theorem \ref{Sylvester} shows that $N_0(29,23;615) = 308$. However, Theorem \ref{Best} gives us $N_0(29,23;k)$ for a lot of values of $k$, as described in Table \ref{Table1}.

\begin{table}[H]
    \centering
    \begin{tabular}{|c|c|c|}
 \hline
$\alpha$ & $k$      &   $N_0(29,23;k)$ \\
\hline
  1 &  -1  & 0 \\
\hline
  3 & 49   & 4  \\
\hline
  5 & 101 &  12 \\
\hline
  7 & 153  & 24   \\
\hline
 9 & 205 &  40  \\
\hline
 11 & 228 & 48 \\
\hline
13 & 280 & 70 \\
\hline
15 & 332 & 96 \\
\hline
17 & 384 & 126 \\
\hline
19 & 436 & 160 \\
\hline
21 & 459 & 176 \\
\hline 
23 & 511 & 216 \\
\hline
25 & 563 & 260 \\
\hline
27 & 615 & 308 \\
\hline
    \end{tabular}
\caption{The values of $N_0(29,23;k)$ versus $k$, as $\alpha$ varies from $1$ to $27$ such that $\alpha$ is odd.}
\label{Table1}
\end{table}

In Section \ref{Sec4}, we prove Theorem \ref{Best} and also show that it is equivalent to Theorem \ref{GenReci} (Equivalence C in Figure \ref{fig:Equival}). For other values of $k$ not covered by Theorem \ref{Best}, we describe an easy method to calculate $N_0(a,b;k)$ in Section \ref{Otherk}.

We conclude our discussion on Question \ref{NewSyl} by mentioning that for given coprime natural numbers $a$ and $b$, the study of properties of numbers that cannot be expressed in the form $ax+by$ (also called \emph{nonrepresentable numbers}), where $x$ and $y$ are nonnegative integers, continues to be an active area of research. Let $NR(a,b)$ denote the set of nonnegative integers $n$ that cannot be expressed in the form $ax + by$. Then, by Sylvester's Theorem, $|NR(a,b)| = \frac{(a-1)(b-1)}{2}$. Brown and Shiue \cite{BS} discovered the sum $S(a,b)$ of nonrepresentable numbers. They proved that $$ S(a,b) := \sum_{n \in NR(a,b)} n = \frac{1}{12}(a-1)(b-1)(2ab-a-b-1). $$ R{\o}dseth considered a generalization of this sum $$ S_m(a,b) = \sum_{n \in NR(a,b)} n^m. $$ These sums $S_m(a,b)$ are commonly known as the \emph{Sylvester sums}. R{\o}dseth \cite{RS} discovered a formula for these sums in terms of Bernoulli numbers and found that $$ S_2(a,b) = \sum_{n \in NR(a,b)} n^2 = \frac{1}{12}(a-1)(b-1)ab(ab-a-b). $$ Recently, Komatsu and Zhang \cite{komatsu2} considered the \emph{weighted Sylvester sums} $$ S_m^{(\lambda)} = \sum_{n \in NR(a,b)} \lambda^{n-1} n^m. $$ They gave explicit expressions for these sums in terms of the Apostol-Bernoulli numbers.

However, in Question \ref{NewSyl}, we consider the number of nonrepresentable numbers below a given number $k$, instead of considering all the nonrepresentable numbers.

In Section \ref{Sec5}, we describe an application of floor function summation results to Jacobi symbols. Our approach leads to a natural straightforward generalization of the Gauss-Eisenstein proof of the law of quadratic reciprocity for Jacobi symbols.  We recall some main steps in the Gauss-Eisenstein proof of the law of quadratic reciprocity for Legendre symbols. More details about these results can be found in \cite[Chapter 3]{Niven}.

\begin{defn}
Let $a$ and $m$ be integers such that $\gcd(a,m)=1$. Then $a$ is called a \emph{quadratic residue modulo m} if the congruence $x^2 \equiv a$ (mod $m$) has a solution. If the congruence has no solution, then $a$ is called a \emph{quadratic nonresidue modulo m}.
\end{defn}

\begin{defn}
Let $p$ denote an odd prime. The \emph{Legendre symbol} $\left(\frac{a}{p} \right)$ is defined to be $1$ if $a$ is a quadratic residue modulo $p$, $-1$ if $a$ is a quadratic nonresidue modulo $p$, and $0$ if $p$ divides $a$.
\end{defn}

 \begin{theorem}[Gauss' Lemma for Legendre symbols]
 \label{GL}
 Suppose $p$ is an odd prime and $\gcd(a,p)=1$. Consider the integers $a, 2a, 3a, \cdots, \left( \frac{p-1}{2} \right) a$ and their least positive residues modulo $p$. If $n$ denotes the number of these residues that exceed $\frac{p}{2}$, then the Legendre symbol $\left(\frac{a}{p} \right)$ satisfies $ \left( \frac{a}{p} \right) = (-1)^n$. 
\end{theorem}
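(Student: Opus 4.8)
\textbf{Proof proposal for Theorem~\ref{GL} (Gauss' Lemma for Legendre symbols).}

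The plan is to follow the classical argument and then note how it fits the floor-function framework of this note. First I would consider the $\frac{p-1}{2}$ numbers $a, 2a, \ldots, \left(\frac{p-1}{2}\right)a$ and reduce each modulo $p$ to its least positive residue in $\{1, 2, \ldots, p-1\}$. These residues are pairwise distinct, since $ia \equiv ja \pmod p$ would force $i \equiv j \pmod p$ and the indices lie in a range of length less than $p$. Partition the residues into those lying in $\left(0, \frac{p}{2}\right)$, say $r_1, \ldots, r_m$, and those lying in $\left(\frac{p}{2}, p\right)$, say $s_1, \ldots, s_n$, so that $m + n = \frac{p-1}{2}$. The key claim is that the numbers $r_1, \ldots, r_m, p - s_1, \ldots, p - s_n$ are exactly $1, 2, \ldots, \frac{p-1}{2}$ in some order. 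Each of these values lies in $\{1, \ldots, \frac{p-1}{2}\}$, so it suffices to prove they are distinct: the $r_i$ are distinct among themselves and the $p - s_j$ are distinct among themselves (from distinctness of the residues), so the only danger is $r_i = p - s_j$, i.e. $r_i + s_j \equiv 0 \pmod p$; but $r_i \equiv i'a$ and $s_j \equiv j'a$ for some indices $i', j' \in \{1, \ldots, \frac{p-1}{2}\}$, whence $(i' + j')a \equiv 0 \pmod p$, forcing $i' + j' \equiv 0 \pmod p$, which is impossible since $2 \le i' + j' \le p - 1$.

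Next I would take the product. From the claim,
$$
\prod_{i=1}^{m} r_i \cdot \prod_{j=1}^{n} (p - s_j) = \left(\frac{p-1}{2}\right)!.
$$
On the other hand, working modulo $p$ and replacing each $p - s_j$ by $-s_j$,
$$
\prod_{i=1}^{m} r_i \cdot \prod_{j=1}^{n} (p - s_j) \equiv (-1)^n \prod_{i=1}^{m} r_i \cdot \prod_{j=1}^{n} s_j \equiv (-1)^n \prod_{k=1}^{\frac{p-1}{2}} (ka) \equiv (-1)^n a^{\frac{p-1}{2}} \left(\frac{p-1}{2}\right)! \pmod p,
$$
where the middle step uses that $\{r_i\} \cup \{s_j\}$ is precisely the set of residues of $\{a, 2a, \ldots, \frac{p-1}{2} a\}$. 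Comparing the two expressions and cancelling $\left(\frac{p-1}{2}\right)!$, which is invertible modulo $p$, gives $(-1)^n a^{\frac{p-1}{2}} \equiv 1 \pmod p$, hence $a^{\frac{p-1}{2}} \equiv (-1)^n \pmod p$. Finally, Euler's criterion states $a^{\frac{p-1}{2}} \equiv \left(\frac{a}{p}\right) \pmod p$; since both sides are $\pm 1$ and $p$ is odd, the congruence is an equality, so $\left(\frac{a}{p}\right) = (-1)^n$.

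The only genuinely delicate point is the distinctness argument establishing that $\{r_i\} \cup \{p - s_j\} = \{1, \ldots, \frac{p-1}{2}\}$; everything else is bookkeeping with products modulo $p$ together with Euler's criterion, which I would cite from \cite[Chapter 3]{Niven} rather than reprove. I would also remark that $n = \sum_{i=1}^{(p-1)/2} \left\lfloor \frac{2ia}{p} \right\rfloor \bmod 2$-type identities connect this count to the floor-function sums appearing in Theorem~\ref{Quad}, which is the bridge used later to pass from Gauss' Lemma to Eisenstein's Lemma for Jacobi symbols.
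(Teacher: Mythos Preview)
Your argument is the classical, correct proof of Gauss' Lemma: the distinctness of the residues, the key bijection $\{r_i\}\cup\{p-s_j\}=\{1,\ldots,\frac{p-1}{2}\}$, the product comparison, and the appeal to Euler's criterion are all carried out cleanly.

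However, there is nothing in the paper to compare it against. The paper does \emph{not} prove Theorem~\ref{GL}; it merely states it as background, with the remark ``More details about these results can be found in \cite[Chapter 3]{Niven}.'' Gauss' Lemma is cited, not established, and the paper's own work begins only at the level of Theorem~\ref{Strong}, Theorem~\ref{Best}, and Lemma~\ref{GE2}. So your write-up is correct but superfluous relative to this paper: the intended ``proof'' here is simply a citation. If you want to align with the paper, replace the argument by a one-line reference to \cite[Chapter~3]{Niven}; if you prefer to keep the proof for self-containment, what you have is exactly the standard textbook version and would be appropriate.
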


 \begin{theorem}[Eisenstein's Lemma for Legendre symbols]
\label{EL}
If $p$ is an odd prime and $a$ is any odd number not divisible by $p$, then $\left(\frac{a}{p} \right) = (-1)^t$ where $$t = \sum_{i=1}^{\frac{p-1}{2}} \left \lfloor \frac{ia}{p} \right \rfloor. $$
\end{theorem}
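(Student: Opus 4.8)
The plan is to derive Eisenstein's Lemma from Gauss's Lemma (Theorem \ref{GL}) by evaluating a single sum in two different ways and comparing the results modulo $2$. First I would apply the division algorithm to each of the numbers $a, 2a, \ldots, \frac{p-1}{2}a$, writing $ia = p\left\lfloor ia/p\right\rfloor + r_i$, where $r_i$ is the least positive residue of $ia$ modulo $p$; note that $r_i \neq 0$ because $\gcd(a,p)=1$. Summing over $i$ and using $\sum_{i=1}^{(p-1)/2} i = \frac{p^2-1}{8}$ produces the identity
$$a\,\frac{p^2-1}{8} = p\,t + \sum_{i=1}^{(p-1)/2} r_i, \qquad t = \sum_{i=1}^{(p-1)/2}\left\lfloor\frac{ia}{p}\right\rfloor.$$

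Next I would split the residues exactly as in the statement of Gauss's Lemma: let $s_1, \ldots, s_m$ be those $r_i$ that are less than $p/2$, and let $g_1, \ldots, g_n$ be those that exceed $p/2$, so that $n$ is precisely the exponent occurring in Theorem \ref{GL}. The key structural fact is that the $\frac{p-1}{2}$ numbers $s_1, \ldots, s_m, p - g_1, \ldots, p - g_n$ are exactly the set $\left\{1, 2, \ldots, \frac{p-1}{2}\right\}$. I would establish this by a counting-plus-distinctness argument: all of these numbers lie in $\left[1, \frac{p-1}{2}\right]$ and there are $m + n = \frac{p-1}{2}$ of them, so it suffices to prove they are pairwise distinct. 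The $s_j$ are distinct among themselves and the $p-g_j$ are distinct among themselves since $a$ is invertible modulo $p$; the only remaining case, $s_j = p - g_k$, would force $r_i + r_{i'} \equiv 0 \pmod{p}$ for some indices $i, i' \le \frac{p-1}{2}$, hence $(i+i')a \equiv 0 \pmod{p}$ and therefore $i + i' \equiv 0 \pmod{p}$, which is impossible because $2 \le i + i' \le p-1$. Summing this set identity yields a second relation,
$$\frac{p^2-1}{8} = \sum_{j=1}^{m} s_j - \sum_{j=1}^{n} g_j + np.$$

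Finally I would combine the two displayed identities. Writing $\sum_i r_i = \sum_j s_j + \sum_j g_j$ and eliminating $\sum_j s_j$ between the two relations gives $(a-1)\frac{p^2-1}{8} = p(t - n) + 2\sum_{j} g_j$. Reducing modulo $2$ and invoking the hypothesis that $a$ is odd, so that $a-1$ is even, forces $p(t-n)$ to be even; since $p$ is odd, this yields $t \equiv n \pmod{2}$. Gauss's Lemma then gives $\left(\frac{a}{p}\right) = (-1)^n = (-1)^t$, which is the desired conclusion.

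I expect the main obstacle to be the set identity in the second paragraph, as it is the one step that is not mere bookkeeping and that secretly carries the combinatorial content underlying Gauss's Lemma. The remainder is an application of the division algorithm together with a parity reduction; it is worth emphasizing that the oddness of $a$ is used only at the very last step, which explains why Eisenstein's formulation requires $a$ to be odd whereas Gauss's Lemma does not.
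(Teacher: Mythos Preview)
Your proof is correct and is precisely the classical argument deriving Eisenstein's Lemma from Gauss's Lemma via the division algorithm and a parity comparison. Note, however, that the paper does not actually supply its own proof of Theorem~\ref{EL}: it is stated in the introduction as background material, with the reader referred to \cite[Chapter~3]{Niven} for details. The proof you have written is essentially the one found in that reference, so in that sense you have reproduced exactly the argument the paper is implicitly invoking.
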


Theorems \ref{Quad} and \ref{EL} immediately lead to the law of quadratic reciprocity which is as follows:

\begin{theorem}[Law of quadratic reciprocity for Legendre symbols]
\label{QuadReci}
For distinct odd primes $p$ and $q$, $$ \left(\frac{p}{q}\right) \left(\frac{q}{p} \right) = (-1)^{\frac{(p-1)(q-1)}{4}}.$$
\end{theorem}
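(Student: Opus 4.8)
The plan is to derive Theorem \ref{QuadReci} directly from the two ingredients already in hand: Eisenstein's Lemma for Legendre symbols (Theorem \ref{EL}) and Gauss's floor-sum identity (Theorem \ref{Quad}). Since $p$ and $q$ are distinct odd primes, $q$ is odd and not divisible by $p$, so Theorem \ref{EL} applies with numerator $q$ and modulus $p$, giving $\left(\frac{q}{p}\right) = (-1)^{s}$ where $s = \sum_{i=1}^{(p-1)/2} \left\lfloor \frac{iq}{p} \right\rfloor$. Symmetrically, $p$ is odd and not divisible by $q$, so Theorem \ref{EL} applies with numerator $p$ and modulus $q$, giving $\left(\frac{p}{q}\right) = (-1)^{t}$ where $t = \sum_{i=1}^{(q-1)/2} \left\lfloor \frac{ip}{q} \right\rfloor$.

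Multiplying these two identities yields $\left(\frac{p}{q}\right)\left(\frac{q}{p}\right) = (-1)^{s+t}$. By Theorem \ref{Quad}, the exponent satisfies $s + t = \frac{(p-1)(q-1)}{4}$, and substituting this value in completes the argument.

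There is essentially no obstacle here: the statement is a one-line corollary of the two preceding results, which is precisely why the text introduces it with ``immediately lead to.'' The only points worth verifying are the hypotheses of Eisenstein's Lemma --- namely that the numerator is odd and coprime to the modulus, both of which hold because $p$ and $q$ are distinct odd primes --- and that the roles of $p$ and $q$ may legitimately be interchanged when applying the lemma the second time. No new computation is required beyond invoking Theorem \ref{Quad} to evaluate $s+t$.
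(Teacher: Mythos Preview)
Your proposal is correct and follows precisely the approach indicated in the paper, which states that Theorems \ref{Quad} and \ref{EL} ``immediately lead to'' Theorem \ref{QuadReci} without spelling out further details. You have simply made explicit the two applications of Eisenstein's Lemma and the substitution from Theorem \ref{Quad}, which is exactly what the paper intends.
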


The theory of quadratic residues can be extended further by generalizing Legendre symbols to Jacobi symbols.  

\begin{defn}
Let $b$ be an odd number and $b = p_1^{r_1} p_2^{r_2} \cdots p_k^{r_k}$ be its prime factorization. Then the \emph{Jacobi symbol} $\left(\frac{a}{b} \right)$ is defined as $$ \left(\frac{a}{b} \right) := \left(\frac{a}{p_1} \right)^{r_1} \left(\frac{a}{p_2} \right)^{r_2} \cdots \left(\frac{a}{p_k} \right)^{r_k}, $$ where $\left(\frac{a}{p_i} \right)$ is the Legendre symbol.
\end{defn}

Schering \cite{Schering} generalized Gauss’ Lemma to the Jacobi symbol. However a direct proof of the Gauss-Schering Lemma \cite{Schering,Cartier,KK} seems to be quite technical. Zolotarev \cite{Zolo} observed that Legendre and Jacobi symbols are connected to signatures of naturally associated permutations. Using this approach, there are some other proofs \cite{DH,BC} showing that Gauss' Lemma can be generalized to the Jacobi symbol.
These methods give direct proofs of the law of quadratic reciprocity for Jacobi symbols at the cost of introducing some auxiliary concepts of an abstract algebraic nature. In the present note, we provide an elementary proof using floor function sums, showing that Eisenstein's Lemma also holds for Jacobi symbols. 

\begin{theorem}[Eisenstein's Lemma for Jacobi symbols]
    \label{Eisenstein}
    For odd positive coprime integers $a$ and $b$, the Jacobi symbol $ \left( \frac{a}{b} \right)$ is given as
 \begin{equation*}
  \left( \frac{a}{b} \right) = (-1)^{\mathlarger\sum\limits_{i=1}^{\frac{b-1}{2}} \left \lfloor \frac{ia}{b}\right \rfloor}.
  \end{equation*}
  \end{theorem}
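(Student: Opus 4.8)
The plan is to prove the multiplicative behaviour of both sides of the claimed identity in $b$, reducing the statement to the case where $b$ is prime, which is exactly Eisenstein's Lemma for Legendre symbols (Theorem \ref{EL}). Write $b = p_1 p_2 \cdots p_k$ as a product of (not necessarily distinct) odd primes, each coprime to $a$. The left-hand side $\left(\frac{a}{b}\right)$ is multiplicative in $b$ by the very definition of the Jacobi symbol, so $\left(\frac{a}{b}\right) = \prod_{j=1}^{k} \left(\frac{a}{p_j}\right) = \prod_{j=1}^k (-1)^{t_j}$, where $t_j = \sum_{i=1}^{(p_j-1)/2}\lfloor ia/p_j\rfloor$. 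Hence it suffices to establish the congruence
\begin{equation*}
\sum_{i=1}^{\frac{b-1}{2}} \left\lfloor \frac{ia}{b} \right\rfloor \equiv \sum_{j=1}^{k} \sum_{i=1}^{\frac{p_j-1}{2}} \left\lfloor \frac{ia}{p_j} \right\rfloor \pmod 2,
\end{equation*}
for any factorization $b = p_1 p_2 \cdots p_k$ of an odd number into odd primes; then induction on $k$ finishes the argument once the two-factor case $b = cd$ (with $c,d$ odd, coprime to $a$, not necessarily prime) is handled.

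First I would record the elementary fact that for odd $a$ coprime to an odd modulus $m$, the parity of $\sum_{i=1}^{(m-1)/2}\lfloor ia/m\rfloor$ equals the parity of $\sum_{i=1}^{m-1}\lfloor ia/m\rfloor$ up to an easily controlled correction, or better, work directly with a symmetric full sum. The cleaner route: note $\sum_{i=1}^{m-1} \lfloor ia/m \rfloor = \frac{(a-1)(m-1)}{2}$ by the classical pairing $i \leftrightarrow m-i$ (using $\gcd(a,m)=1$), and relate $\sum_{i=1}^{(m-1)/2}\lfloor ia/m\rfloor$ to counting lattice points; but since we only need parities, the efficient tool is Gauss's Lemma itself. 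So the real plan is: show $\left(\frac{a}{cd}\right) = \left(\frac{a}{c}\right)\left(\frac{a}{d}\right)$ (immediate from the definition) and show $(-1)^{T(cd)} = (-1)^{T(c)}(-1)^{T(d)}$ where $T(m) = \sum_{i=1}^{(m-1)/2}\lfloor ia/m\rfloor$, which is the substantive step.

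To prove $T(cd) \equiv T(c) + T(d) \pmod 2$, I would count, modulo $2$, the number of pairs $(i,\ell)$ with $1 \le i \le (cd-1)/2$ and $1 \le \ell \le \lfloor ia/(cd)\rfloor$, i.e. lattice points under the line $y = ax/(cd)$ with $1 \le x \le (cd-1)/2$; equivalently points in the triangle with $cd \cdot y < a \cdot x$. The standard manoeuvre (the same one underlying Eisenstein's geometric proof of reciprocity) is to split the index $i$ according to its residues modulo $c$ and modulo $d$ via the Chinese Remainder Theorem, and to compare $\lfloor ia/(cd)\rfloor$ with $\lfloor (i \bmod c)a/c\rfloor$-type quantities. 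Concretely, for each $i$ write the division identities $ia = (cd)q_i + r_i$, and also $i a = c \, q_i' + s_i$ with $q_i' = \lfloor ia/c\rfloor$ and similarly with $d$; one then shows $q_i \equiv q_i' + q_i'' \pmod 2$ fails pointwise but holds after summation over a symmetric range, because the discrepancies come in cancelling pairs under $i \mapsto cd - i$ (or $i \mapsto$ the CRT-partner). I would make this precise by the substitution $i \mapsto cd - i$ on the half-range $(cd-1)/2 < i \le cd-1$ to fold the full-range sum $\sum_{i=1}^{cd-1}\lfloor ia/(cd)\rfloor$ in half, using that $\lfloor ia/(cd)\rfloor + \lfloor (cd-i)a/(cd)\rfloor = a - 1$ (an odd number), and do the analogous folding for the $c$- and $d$-sums, so that all three half-sums are expressed through their respective full-sums; then the full-sum identity $\sum_{i=1}^{m-1}\lfloor ia/m\rfloor = \frac{(a-1)(m-1)}{2}$ reduces everything to a parity computation in $\frac{(a-1)(cd-1)}{2}$ versus $\frac{(a-1)(c-1)}{2} + \frac{(a-1)(d-1)}{2}$, which, because $a$ is odd and $c,d$ are odd, matches modulo $2$ after accounting for the count of $i$ in the half-range that are divisible by $c$ or by $d$ (the ``boundary'' lattice points on the axes, which Gauss's pairing misses).

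The main obstacle I expect is precisely this bookkeeping of boundary terms: when folding the sum $\sum_{i=1}^{m-1}$ into $\sum_{i=1}^{(m-1)/2}$ the middle is clean (no integer $ia/m$ since $\gcd(a,m)=1$ and $m$ is odd), but when passing between modulus $cd$ and moduli $c,d$ the indices $i$ that are multiples of $c$ or $d$ behave differently, and one must verify their net contribution to the parity is zero — this is where the coprimality $\gcd(a,b)=1$ and the oddness of all quantities get used essentially. An alternative, possibly cleaner, execution would avoid the two-factor reduction entirely: apply Gauss's Lemma for Legendre symbols (Theorem \ref{GL}) to each $p_j$ to write $\left(\frac{a}{b}\right) = (-1)^{n_1 + \cdots + n_k}$ with $n_j$ the number of ``large'' least residues of $a, 2a, \ldots, \frac{p_j-1}{2}a$ modulo $p_j$, then show $\sum_j n_j \equiv \sum_j t_j \equiv T(b) \pmod 2$ by the same folding identity applied uniformly; this isolates all the delicate counting into a single lemma of the shape ``$T(m) \equiv$ (number of $i \le (m-1)/2$ with $ia \bmod m > m/2$) $\pmod 2$'' proved exactly as in the classical Legendre case, which holds for composite odd $m$ verbatim. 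Either way, once the parity identity $T(b) \equiv \sum_j t_j \pmod 2$ is in hand, combining it with multiplicativity of the Jacobi symbol and Theorem \ref{EL} applied to each prime $p_j$ yields $\left(\frac{a}{b}\right) = \prod_j (-1)^{t_j} = (-1)^{\sum_j t_j} = (-1)^{T(b)}$, which is the claim.
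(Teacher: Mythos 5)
You correctly reduce the theorem to the two-factor congruence $T(cd)\equiv T(c)+T(d)\pmod 2$ with $T(m)=\sum_{i=1}^{(m-1)/2}\lfloor ia/m\rfloor$ — this is exactly the paper's Lemma \ref{GE1} — but your proposed proof of that congruence does not work, and that congruence is the entire content of the theorem. The folding device you lean on is vacuous: since $a$ is odd, the identity $\lfloor ia/m\rfloor+\lfloor(m-i)a/m\rfloor=a-1$ gives $\lfloor(m-i)a/m\rfloor\equiv\lfloor ia/m\rfloor\pmod 2$, so the full-range sum satisfies $\sum_{i=1}^{m-1}\lfloor ia/m\rfloor\equiv 2T(m)\equiv 0\pmod 2$ for every odd $m$ coprime to $a$. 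The full sums therefore carry no information whatsoever about the parity of the half sums, and ``expressing the half-sums through their full-sums'' collapses to the tautology $0\equiv 0$. What you defer as ``bookkeeping of boundary terms'' is not a residual correction to an otherwise working argument — it is the whole problem. Your CRT sketch (``$q_i\equiv q_i'+q_i''$ fails pointwise but holds after summation because discrepancies come in cancelling pairs'') asserts precisely the statement to be proved without a mechanism for the cancellation; and your alternative route via Gauss's Lemma needs $\sum_j n_j\equiv T(b)\pmod 2$, which is again the target congruence (essentially the Gauss--Schering Lemma, which the paper notes is technical to prove directly).

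For comparison, the paper closes this gap by a genuinely different manoeuvre: it first uses the reciprocity identity of Theorem \ref{Quad} together with Remark \ref{odd} (or Theorem \ref{Strong}) to flip each of the three half-sums in Lemma \ref{GE1} into a sum over the \emph{common} modulus $a$ with summation index running to $\frac{a-1}{2}$, reducing everything to Lemma \ref{GE2}: $\sum_{i=1}^{(a-1)/2}\lfloor ibc/a\rfloor\equiv\sum_{i=1}^{(a-1)/2}\lfloor ib/a\rfloor+\sum_{i=1}^{(a-1)/2}\lfloor ic/a\rfloor\pmod 2$. That statement is then proved by a remainder argument: writing $c_i$ for the residue of $ic$ modulo $a$, one has $r(ibc)=r(bc_i)$, and the substitution $d_i=\min(c_i,a-c_i)$ — which permutes $\{1,\dots,\frac{a-1}{2}\}$ and preserves $\lfloor bd_i/a\rfloor$ modulo $2$ — supplies exactly the cancellation your sketch hopes for. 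Some step of this kind (or an equivalently substantive one) is indispensable; as written, your proposal has no proof of the key lemma.
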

  
  To prove Theorem \ref{Eisenstein} using Eisenstein's Lemma for Legendre symbols (Theorem \ref{EL}), it suffices to prove the following result.
  
  \begin{lemma}
  \label{GE1}
  For odd positive integers $a$, $b$ and $c$ such that $b$ and $c$ are coprime with $a$, $$ \sum_{i=1}^{\frac{bc-1}{2}} \left \lfloor \frac{ia}{bc}  \right \rfloor \equiv \sum_{i=1}^{\frac{b-1}{2}} \left \lfloor \frac{ia}{b} \right \rfloor + \sum_{i=1}^{\frac{c-1}{2}} \left \lfloor \frac{ia}{c} \right \rfloor \pmod 2 . $$
  \end{lemma}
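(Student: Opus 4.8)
\textbf{Proof proposal for Lemma \ref{GE1}.}

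The plan is to count lattice points under the line $y = \frac{a}{bc}x$ (equivalently, inside a rectangle) in two different ways and track everything modulo $2$. First I would observe that for a positive integer $m$ coprime to $a$, the sum $\sum_{i=1}^{(m-1)/2}\left\lfloor\frac{ia}{m}\right\rfloor$ counts the lattice points $(i,j)$ with $1 \le i \le \frac{m-1}{2}$ and $1 \le j \le \frac{ia}{m}$, i.e.\ the integer points strictly below the line through the origin of slope $\frac{a}{m}$ in the relevant vertical strip. So the three sums in the lemma are lattice-point counts of this shape for the moduli $bc$, $b$, and $c$ respectively. The idea is to take the count for modulus $bc$ over the range $1 \le i \le \frac{bc-1}{2}$ and split the index $i$ according to its residue or its quotient upon division by $b$ (and symmetrically by $c$), so as to extract copies of the sums for $b$ and for $c$, with the remaining ``cross terms'' shown to be even.

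Concretely, I would write each $i$ in the range $1 \le i \le \frac{bc-1}{2}$ and use the identity $\left\lfloor\frac{ia}{bc}\right\rfloor = \left\lfloor\frac{1}{c}\left\lfloor\frac{ia}{b}\right\rfloor \cdot \frac{1}{1}\right\rfloor$-type nesting — more precisely the standard fact $\left\lfloor\frac{x}{bc}\right\rfloor = \left\lfloor\frac{\lfloor x/b\rfloor}{c}\right\rfloor$ for integer $x$ — to turn $\sum_{i}\left\lfloor\frac{ia}{bc}\right\rfloor$ into $\sum_{i}\left\lfloor\frac{1}{c}\left\lfloor\frac{ia}{b}\right\rfloor\right\rfloor$. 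Writing $\left\lfloor\frac{ia}{b}\right\rfloor = cq_i + r_i$ with $0 \le r_i < c$, the summand becomes $q_i$, and I separately have $\sum_i\left\lfloor\frac{ia}{b}\right\rfloor = c\sum_i q_i + \sum_i r_i$. Reducing mod $2$ (using that $c$ is odd, so $c \equiv 1$) gives $\sum_i q_i \equiv \sum_i \left\lfloor\frac{ia}{b}\right\rfloor - \sum_i r_i \pmod 2$. Thus $\sum_{i=1}^{(bc-1)/2}\left\lfloor\frac{ia}{bc}\right\rfloor \equiv \sum_{i=1}^{(bc-1)/2}\left\lfloor\frac{ia}{b}\right\rfloor - \sum_{i=1}^{(bc-1)/2} r_i \pmod 2$, where $r_i = \left\lfloor\frac{ia}{b}\right\rfloor \bmod c$. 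The next step is to evaluate $\sum_{i=1}^{(bc-1)/2}\left\lfloor\frac{ia}{b}\right\rfloor$ modulo $2$ by periodicity: as $i$ runs over a full block of length $b$, the values $\left\lfloor\frac{ia}{b}\right\rfloor$ increase by exactly $a$ per block, and over $\frac{c-1}{2}$ full blocks plus a tail of length $\frac{b-1}{2}$ one gets, after a short computation, a quantity congruent to $\frac{c-1}{2}\cdot(\text{something even or controlled}) + \sum_{i=1}^{(b-1)/2}\left\lfloor\frac{ia}{b}\right\rfloor$; the arithmetic-progression sum $\sum$ over a block contributes $a\binom{b}{2}/b$-type terms that I would check are even since $a$ is odd and $\frac{b-1}{2}\cdot\frac{?}{}$ — this is where care is needed. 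Symmetrically, $\sum r_i$ should reduce to $\sum_{i=1}^{(c-1)/2}\left\lfloor\frac{ia}{c}\right\rfloor$ modulo $2$ by an analogous residue-counting argument, since $r_i$ cycles through residues mod $c$ in a way governed by the slope $\frac{a}{c}$.

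The main obstacle I anticipate is the bookkeeping in the ``tail'' and the parity of the block-sum contributions: the ranges $\frac{bc-1}{2}$, $\frac{b-1}{2}$, $\frac{c-1}{2}$ do not split evenly, and one must use the oddness of $a$, $b$, $c$ and the coprimality conditions at exactly the right moments to kill the leftover terms mod $2$. A cleaner route, which I would actually prefer to carry out, is the symmetric lattice-point count: show directly that the number of lattice points $(i,j)$ with $1 \le i$, $1 \le j$, $\frac{i}{b}+\frac{j}{a}$-type constraints below the line $aj < (bc)^{-1}\cdots$ — i.e.\ count points in the triangle $\{(i,j): 1\le i \le \frac{bc-1}{2},\ 1 \le j \le \frac{ia}{bc}\}$ and pair it off against the two smaller triangles for $b$ and $c$ via the bijection $i \mapsto (i \bmod b,\ i \bmod c)$ furnished by the Chinese Remainder Theorem, with the fixed/diagonal points (those lying on the lines $y=\frac{a}{bc}x$ or at block boundaries) forming an even-sized symmetric set. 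Either way, once the cross-terms are shown to vanish mod $2$, the congruence in the lemma follows, and combined with Theorem \ref{EL} this yields multiplicativity of the right-hand exponent in the second argument, hence Theorem \ref{Eisenstein}.
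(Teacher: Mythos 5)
Your opening reduction is sound: the nesting identity $\lfloor x/(bc)\rfloor=\lfloor\lfloor x/b\rfloor/c\rfloor$ together with $\lfloor ia/b\rfloor=cq_i+r_i$ and $c$ odd does give $\sum_{i=1}^{(bc-1)/2}\lfloor ia/(bc)\rfloor\equiv\sum_{i=1}^{(bc-1)/2}\lfloor ia/b\rfloor-\sum_{i=1}^{(bc-1)/2}r_i\pmod 2$. But the two intermediate reductions you then rely on are both \emph{false as stated}, and the proposal never closes that gap. Take $a=3$, $b=c=5$ (a case the lemma must cover, since $b$ and $c$ are assumed coprime to $a$ but not to each other). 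Then $\sum_{i=1}^{12}\lfloor 3i/5\rfloor=42$ is even while $\sum_{i=1}^{2}\lfloor 3i/5\rfloor=1$ is odd, so the long sum with denominator $b$ does \emph{not} reduce mod $2$ to the short one; likewise $\sum_{i=1}^{12}r_i=22$ is even while $\sum_{i=1}^{2}\lfloor 3i/5\rfloor=1$ is odd, so $\sum_i r_i$ does \emph{not} reduce to the $c$-sum. The two discrepancies cancel against each other --- they must, since the lemma is true and your first step is correct --- but your plan is to establish each reduction separately, and that plan cannot succeed: the entire difficulty of the lemma is concentrated in exactly the ``tail and block-sum bookkeeping'' you defer. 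Your alternative ``cleaner route'' via the bijection $i\mapsto(i\bmod b,\,i\bmod c)$ is worse, since it requires $\gcd(b,c)=1$, which the lemma does not assume and which fails in precisely the application the lemma exists for (Jacobi symbols with repeated prime factors, e.g.\ $b=c=p$).

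The paper sidesteps all of this by refusing to juggle the three denominators $b$, $c$, $bc$ at all: it applies the reciprocity identity of Theorem \ref{Quad} (valid for any odd coprime pair by Remark \ref{odd}) to each of the three sums, replacing $\sum_{i=1}^{(m-1)/2}\lfloor ia/m\rfloor$ by $\frac{(a-1)(m-1)}{4}-\sum_{i=1}^{(a-1)/2}\lfloor im/a\rfloor$ for $m=b,c,bc$. The constants match modulo $2$ because their combination is $\frac{(a-1)(b-1)(c-1)}{4}=(a-1)\cdot\frac{(b-1)(c-1)}{4}$, which is even, so Lemma \ref{GE1} becomes Lemma \ref{GE2}: a single denominator $a$ and a single index range $1\le i\le\frac{a-1}{2}$, which is then handled by a residue-pairing argument ($c_i\mapsto\min(c_i,a-c_i)$). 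If you want to salvage your approach, you must compute the parity of the \emph{difference} $\sum_{i=1}^{(bc-1)/2}\lfloor ia/b\rfloor-\sum_{i=1}^{(bc-1)/2}r_i$ jointly rather than term by term, and you should expect genuinely nonvanishing correction terms such as $\frac{c-1}{2}+\binom{(c-1)/2}{2}+\frac{(b-1)(c-1)}{4}$ on each side that only cancel in combination.
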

  
  Using Theorem \ref{Quad} and Remark \ref{odd} (or Theorem \ref{Strong} instead), we can easily express all of the sums above in terms of summations of fractions having denominator $a$ and summation index $\frac{a-1}{2}$. From there, Lemma \ref{GE1} reduces to proving the following result.
  
  \begin{lemma}
  \label{GE2}
  For odd positive integers $a$, $b$ and $c$ such that $b$ and $c$ are coprime with $a$, $$ \sum_{i=1}^{\frac{a-1}{2}} \left \lfloor \frac{ibc}{a}  \right \rfloor \equiv \sum_{i=1}^{\frac{a-1}{2}} \left \lfloor \frac{ib}{a} \right \rfloor + \sum_{i=1}^{\frac{a-1}{2}} \left \lfloor \frac{ic}{a} \right \rfloor \pmod 2 . $$
  \end{lemma}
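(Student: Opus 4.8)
The plan is to peel the residue of $ib$ modulo $a$ out of the left-hand side. Write $r_i$ for the least positive residue of $ib$ modulo $a$; since $\gcd(a,b)=1$ and $1\le i\le \frac{a-1}{2}$, we have $1\le r_i\le a-1$ and $ib=a\lfloor ib/a\rfloor+r_i$. Multiplying by $c$ and dividing by $a$, and using that $c\lfloor ib/a\rfloor$ is an integer, gives $\lfloor ibc/a\rfloor=c\lfloor ib/a\rfloor+\lfloor r_ic/a\rfloor$. Summing over $i$ and using that $c$ is odd reduces the lemma to the congruence
\[
\sum_{i=1}^{\frac{a-1}{2}}\left\lfloor\frac{r_ic}{a}\right\rfloor\equiv\sum_{i=1}^{\frac{a-1}{2}}\left\lfloor\frac{ic}{a}\right\rfloor\pmod 2 ,
\]
i.e.\ that replacing the residues $r_i$ by the plain indices $i$ does not affect the sum modulo $2$.

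To prove this I would use a pairing argument. Since $\gcd(a,b)=1$, multiplication by $b$ permutes the nonzero residues modulo $a$, and $(a-i)b\equiv-ib\pmod a$ gives $r_{a-i}=a-r_i$; hence $\{r_1,\dots,r_{(a-1)/2}\}$ is a \emph{transversal} of the family of pairs $\{x,a-x\}$ with $1\le x\le a-1$ (it contains exactly one element of each pair), and so is $\{1,2,\dots,\frac{a-1}{2}\}$. Now for any $x$ with $1\le x\le a-1$, coprimality of $a$ and $c$ forces the fractional parts of $xc/a$ and $(a-x)c/a$ to sum to exactly $1$, whence $\lfloor xc/a\rfloor+\lfloor(a-x)c/a\rfloor=c-1$; since $c-1$ is even, $\lfloor xc/a\rfloor\equiv\lfloor(a-x)c/a\rfloor\pmod 2$. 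Summing this congruence over the pairs shows that $\sum_{x\in T}\lfloor xc/a\rfloor$ has the same value modulo $2$ for every transversal $T$, in particular for $T=\{r_i\}$ and $T=\{1,\dots,\frac{a-1}{2}\}$, which is exactly what is needed.

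The computations here are all routine; the one thing to be careful about is the divisibility bookkeeping, namely that $a\nmid ib$, $a\nmid xc$, etc., so that the ``fractional parts sum to $1$'' step is exact rather than off by one, and that the permutation and pairing statements are applied for general odd $a$ satisfying the stated coprimality hypotheses, with no appeal to primality. I do not expect a genuine obstacle: the only mildly non-obvious point is the observation that $\lfloor xc/a\rfloor$ and $\lfloor(a-x)c/a\rfloor$ have the same parity, which is precisely what makes the transversal swap legitimate.
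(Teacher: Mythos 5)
Your proof is correct and follows essentially the same route as the paper's: after peeling off the residue (the paper does this for $ic$ rather than $ib$, and via a mod-$2$ computation with remainders rather than your cleaner exact identity $\lfloor ibc/a\rfloor = c\lfloor ib/a\rfloor + \lfloor r_ic/a\rfloor$), the remaining step in both arguments is the folding $x \mapsto \min(x,a-x)$ together with the parity identity $\lfloor xc/a\rfloor + \lfloor (a-x)c/a\rfloor = c-1$, which is exactly your transversal argument.
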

  
  We prove Lemma \ref{GE2} in Section \ref{Sec5}. The law of quadratic reciprocity for Jacobi symbols (Theorem \ref{QuadReci2} below) is then immediately obtained using Eisenstein's Lemma for Jacobi symbols (Theorem \ref{Eisenstein}), and Theorem \ref{Quad} and Remark \ref{odd} (or Theorem \ref{Strong} instead). 
  
  \begin{theorem}[Law of quadratic reciprocity for Jacobi symbols]
\label{QuadReci2}
If $a$ and $b$ are positive odd coprime integers, then $$ \Big( \frac{a}{b} \Big) \Big( \frac{b}{a} \Big) = (-1)^{\frac{(a-1)(b-1)}{4}}. $$
\end{theorem}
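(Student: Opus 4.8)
The plan is to obtain Theorem \ref{QuadReci2} as an immediate consequence of Eisenstein's Lemma for Jacobi symbols (Theorem \ref{Eisenstein}) together with the floor-sum reciprocity identity of Theorem \ref{Quad}, extended to odd coprime integers via Remark \ref{odd} (equivalently, Theorem \ref{Strong}). Since $a$ and $b$ are odd positive coprime integers, both Jacobi symbols $\left(\frac{a}{b}\right)$ and $\left(\frac{b}{a}\right)$ are governed by Theorem \ref{Eisenstein}, and the exponents that appear are exactly the two floor sums whose total is pinned down by the Gauss-type identity.

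First I would apply Theorem \ref{Eisenstein} to each symbol. Because $\gcd(a,b)=1$ and both integers are odd, taking $a$ as numerator and $b$ as modulus gives $\left(\frac{a}{b}\right) = (-1)^{\sum_{i=1}^{(b-1)/2} \lfloor ia/b \rfloor}$, and symmetrically, swapping the roles of $a$ and $b$, gives $\left(\frac{b}{a}\right) = (-1)^{\sum_{i=1}^{(a-1)/2} \lfloor ib/a \rfloor}$. Multiplying these two identities, the product of Jacobi symbols becomes $(-1)$ raised to the sum of the two exponents:
\[
\left(\frac{a}{b}\right)\left(\frac{b}{a}\right) = (-1)^{\sum_{i=1}^{(b-1)/2} \lfloor ia/b \rfloor \; + \; \sum_{i=1}^{(a-1)/2} \lfloor ib/a \rfloor}.
\]

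Next I would evaluate the combined exponent. By Theorem \ref{Quad} together with Remark \ref{odd}, which asserts that the reciprocity relation holds verbatim for all odd positive coprime integers and not merely for primes, the two floor sums satisfy
\[
\sum_{i=1}^{(a-1)/2} \left\lfloor \frac{ib}{a} \right\rfloor + \sum_{i=1}^{(b-1)/2} \left\lfloor \frac{ia}{b} \right\rfloor = \frac{(a-1)(b-1)}{4}.
\]
Substituting this value into the exponent yields $\left(\frac{a}{b}\right)\left(\frac{b}{a}\right) = (-1)^{(a-1)(b-1)/4}$, which is exactly the claimed law of quadratic reciprocity for Jacobi symbols.

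The point to emphasize is that, once Theorem \ref{Eisenstein} has been established (the genuine work of this note, resting on Lemmas \ref{GE1} and \ref{GE2}), there is no remaining obstacle: the passage to Theorem \ref{QuadReci2} is a purely formal two-line deduction combining Eisenstein's Lemma with the floor-sum identity. The only care needed is bookkeeping of which variable plays the role of modulus in each application of Theorem \ref{Eisenstein}, so that the two exponents line up precisely with the two summands of the reciprocity identity; the oddness and coprimality of $a$ and $b$ guarantee that both applications are legitimate and that $\frac{a-1}{2}$ and $\frac{b-1}{2}$ are the correct summation limits.
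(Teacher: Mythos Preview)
Your argument is correct and matches the paper's own approach: the paper likewise states that Theorem \ref{QuadReci2} follows immediately from Eisenstein's Lemma for Jacobi symbols (Theorem \ref{Eisenstein}) combined with Theorem \ref{Quad} and Remark \ref{odd} (or equivalently Theorem \ref{Strong}). There is nothing to add.
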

   
  \begin{remark}
  Using the standard techniques in the proof of Theorem \ref{EL} \cite[Theorem 3.3]{Niven}, it is straightforward to deduce Gauss' Lemma for Jacobi symbols from Eisenstein's Lemma for Jacobi symbols.
 \end{remark}

\section{Equivalence between Theorems \ref{Sylvester} and \ref{Strong}}
\label{Equiv}
In this section, we prove Theorem \ref{Strong} and show that it is equivalent to Theorem \ref{Sylvester} (Equivalence B in Figure \ref{fig:Equival}). 

 \begin{proof}[Proof of Theorem \ref{Strong}]
Without loss of generality, suppose that $b < a$. By setting the index of summation equal to $\left \lfloor \frac{a}{2} \right \rfloor$ in Theorem \ref{GenReci}, we get  
\begin{equation}
\label{Helping}
\sum_{i=1}^{\left \lfloor\frac{a}{2} \right\rfloor} \Big \lfloor\frac{ib}{a} \Big \rfloor+   \sum_{i=1}^{K} \Big \lfloor\frac{ia}{b} \Big \rfloor= K \Big \lfloor\frac{a}{2} \Big\rfloor,
\end{equation}
 where $$K =  \left \lfloor\frac{\left \lfloor\frac{a}{2} \right \rfloor b}{a} \right\rfloor. $$
We split the calculation into three cases based on the parity of $a$ and $b$. 

Case 1: Suppose $a$ is even, then $K = \left \lfloor\frac{b}{2} \right \rfloor$, and we are done.

Case 2: Suppose $a$ and $b$ are both odd, then $$ K = \Big \lfloor\frac{(a-1)b}{2a} \Big \rfloor= \Big \lfloor\frac{b-1}{2} + \frac{a-b}{2a} \Big \rfloor= \frac{b-1}{2} = \Big \lfloor \frac{b}{2} \Big \rfloor. $$ 

Case 3: Suppose $a$ is odd and $b$ is even, then $$ K = \Big \lfloor\frac{(a-1)b}{2a} \Big \rfloor = \Big \lfloor\frac{b}{2} - \frac{b}{2a} \Big \rfloor = \frac{b}{2} - 1 = \Big \lfloor\frac{b}{2} \Big \rfloor- 1,$$ and thus \eqref{Helping} becomes 

\begin{equation}
\label{Helping'}
\sum_{i=1}^{\left \lfloor\frac{a}{2} \right \rfloor} \Big \lfloor\frac{ib}{a} \Big \rfloor+   \sum_{i=1}^{\left \lfloor\frac{b}{2} \right \rfloor- 1} \Big \lfloor\frac{ia}{b} \Big \rfloor= \left(\Big \lfloor\frac{b}{2} \Big \rfloor- 1 \right) \Big \lfloor\frac{a}{2} \Big \rfloor.
\end{equation}

From \eqref{Helping'}, the theorem easily follows in this case.
\end{proof}

We establish the equivalence between Theorems \ref{Sylvester} and \ref{Strong} (Equivalence B in Figure \ref{fig:Equival}). Since the proofs of Lemmas \ref{EqnSum'}, \ref{Threetotwo'}, and \ref{SolnOther'} below are easy generalizations of our proofs in \cite[Lemma 10, Lemma 12, and Lemma 13]{Binner} respectively, we skip the details here. For the remainder of this section, suppose $a$ and $b$ are positive coprime integers.

\begin{lemma}
\label{EqnSum'}
The number of nonnegative integer solutions $(x,y,z)$ of the equation $ax + by + z =  b \Big \lfloor \frac{a}{2} \Big \rfloor$ is given by $$\Big \lfloor \frac{a}{2} \Big \rfloor+ 1 +\sum_{i=1}^{\left \lfloor \frac{a}{2} \right \rfloor} \Big \lfloor \frac{ib}{a} \Big \rfloor.$$ 
\end{lemma}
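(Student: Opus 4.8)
The plan is to eliminate the slack variable $z$ and turn the problem into a two-dimensional lattice-point count. Since $z = b\lfloor a/2\rfloor - ax - by$ is determined by $x$ and $y$, and the condition $z\ge 0$ is exactly $ax+by\le b\lfloor a/2\rfloor$, the number of nonnegative solutions $(x,y,z)$ equals the number of pairs of nonnegative integers $(x,y)$ with $ax+by\le b\lfloor a/2\rfloor$, i.e.\ the number of lattice points in a certain right triangle with legs along the axes.

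Next I would slice this triangular region by fixing $y$. The inequality forces $by\le b\lfloor a/2\rfloor$, hence $0\le y\le \lfloor a/2\rfloor$, and for each such $y$ the admissible values of $x$ are precisely the integers $0\le x\le \big\lfloor b(\lfloor a/2\rfloor - y)/a\big\rfloor$, which is $\big\lfloor b(\lfloor a/2\rfloor - y)/a\big\rfloor + 1$ values. Summing over $y$ gives the count $\big(\lfloor a/2\rfloor + 1\big) + \sum_{y=0}^{\lfloor a/2\rfloor} \big\lfloor b(\lfloor a/2\rfloor - y)/a\big\rfloor$.

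Finally I would reindex by $i = \lfloor a/2\rfloor - y$: as $y$ runs from $0$ to $\lfloor a/2\rfloor$, the index $i$ runs over the same set, so $\sum_{y=0}^{\lfloor a/2\rfloor}\big\lfloor b(\lfloor a/2\rfloor - y)/a\big\rfloor = \sum_{i=0}^{\lfloor a/2\rfloor}\lfloor ib/a\rfloor = \sum_{i=1}^{\lfloor a/2\rfloor}\lfloor ib/a\rfloor$, the $i=0$ term being zero. Combining the two pieces yields exactly $\lfloor a/2\rfloor + 1 + \sum_{i=1}^{\lfloor a/2\rfloor}\lfloor ib/a\rfloor$, as claimed.

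There is no real obstacle here; this is the kind of routine count the paper (fairly) declines to spell out. The only point deserving a moment's care is verifying that for each fixed $y$ with $0\le y\le\lfloor a/2\rfloor$ the quantity $b(\lfloor a/2\rfloor - y)/a$ is nonnegative, so that the number of valid $x$ (which must include $x=0$) is genuinely $\lfloor\,\cdot\,\rfloor+1$ and no boundary value is miscounted. Note also that coprimality of $a$ and $b$ is not needed for this lemma, although it is assumed throughout the section.
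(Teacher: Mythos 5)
Your proposal is correct and is essentially the standard lattice-point count that the paper defers to (it cites the analogous Lemma 10 of the earlier paper rather than spelling it out): eliminate $z$, slice over $y$ from $0$ to $\lfloor a/2\rfloor$, and reindex. The bookkeeping is right, including the observation that coprimality is not needed here.
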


\begin{lemma}
\label{Threetotwo'}
The number of nonnegative integer solutions $(x,y,z)$ of the equation $$ax + by + z =  a \Big \lfloor \frac{b}{2} \Big \rfloor + b \Big \lfloor \frac{a}{2} \Big \rfloor$$ is equal to  $$a \Big \lfloor \frac{b}{2} \Big \rfloor + b \Big \lfloor \frac{a}{2} \Big \rfloor+1 - N_0,$$
where $N_0$ is the number of natural numbers which cannot be expressed as $ax + by$ for any nonnegative integers $x$ and $y$.
\end{lemma}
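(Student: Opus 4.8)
The plan is to turn the count into a lattice-point problem and then set up a bijection with the representable integers in a suitable range. Write $M := a\lfloor a/2 \rfloor$... more precisely $M := a\lfloor b/2 \rfloor + b\lfloor a/2 \rfloor$. A nonnegative solution $(x,y,z)$ of $ax+by+z = M$ is completely determined by the pair $(x,y)$ via $z = M - ax - by$, so the number of such solutions equals the number of pairs $(x,y)$ of nonnegative integers with $ax + by \le M$. I will show that this equals the number of integers in $\{0,1,\dots,M\}$ that can be written as $ax+by$ with $x,y \ge 0$ (call these the \emph{representable} integers in $[0,M]$), by arguing that the map $(x,y)\mapsto ax+by$ is a bijection from the set of pairs $(x,y)$ with $x,y\ge 0$ and $ax+by\le M$ onto the representable integers in $[0,M]$.

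Surjectivity onto the representable integers $\le M$ is immediate, so the real content is injectivity, i.e.\ that every representable $n$ with $0\le n\le M$ has a \emph{unique} representation $n = ax+by$ with $x,y\ge 0$. To see this, take the unique representation with $0\le x<b$; then $y = (n-ax)/b \le n/b \le M/b$, and the crucial estimate is $M/b < a$. Indeed $M/b = \tfrac{a}{b}\lfloor b/2\rfloor + \lfloor a/2\rfloor \le \tfrac a2 + \tfrac a2 = a$, and since $a$ and $b$ are coprime they cannot both be even, so at least one of the two inequalities is strict and $M/b < a$. Hence $0 \le y < a$. Every representation of $n$ other than the chosen one has the form $(x+kb,\,y-ka)$ for some integer $k\ge 1$, which is impossible because $y - a < 0$; this gives uniqueness, and hence the bijection.

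It follows that the number of solutions equals the number of representable integers in $[0,M]$, namely $(M+1)$ minus the number of non-representable integers in $[0,M]$. Since $0$ is representable and the largest non-representable natural number is the Frobenius number $ab-a-b$, which satisfies $ab-a-b \le ab - \tfrac{a+b}{2} = a\tfrac{b-1}{2} + b\tfrac{a-1}{2} \le a\lfloor b/2\rfloor + b\lfloor a/2\rfloor = M$, every non-representable natural number lies in $[0,M]$; hence the number of non-representable integers in $[0,M]$ is exactly $N_0$. Therefore the number of solutions equals $M+1-N_0 = a\lfloor b/2\rfloor + b\lfloor a/2\rfloor + 1 - N_0$, as claimed. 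The only delicate point is the uniqueness step, which hinges on the inequality $M/b < a$ — this is exactly where coprimality of $a$ and $b$ is used, to rule out $a\equiv b\equiv 0 \pmod 2$ — together with the companion bound $M \ge ab-a-b$; both are verified by separating the parities of $a$ and $b$, just as in \cite[Lemma 12]{Binner} for odd $a$ and $b$ (where $\lfloor a/2\rfloor = \tfrac{a-1}{2}$), and carrying the floor functions through these cases involves no new idea.
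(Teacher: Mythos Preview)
Your argument is correct and is exactly the intended one: the paper omits the proof and points to \cite[Lemma~12]{Binner}, and your write-up is precisely the natural extension of that lemma to arbitrary parities, using (i) the uniqueness of nonnegative representations for $n<ab$ via $M/b<a$, and (ii) the bound $ab-a-b\le M$ to ensure all nonrepresentables lie below $M$. One tiny omission worth stating explicitly is why $k\le -1$ is excluded in the uniqueness step (because $x<b$ forces $x+kb<0$), but otherwise nothing is missing.
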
 

\begin{lemma}
\label{SolnOther'}
The number of nonnegative integer solutions $(x,y,z)$ of the equation $$ax+by+z =   a \Big \lfloor\frac{b}{2} \Big \rfloor+ b \Big \lfloor\frac{a}{2} \Big \rfloor$$ is equal to
$$2 \left(\sum_{i=1}^{\left \lfloor\frac{a}{2} \right \rfloor} \Big \lfloor\frac{ib}{a} \Big \rfloor + \sum_{i=1}^{\left \lfloor\frac{b}{2} \right \rfloor} \Big \lfloor\frac{ia}{b} \Big \rfloor\right) + \Big \lfloor\frac{a}{2} \Big \rfloor+ \Big \lfloor\frac{b}{2} \Big \rfloor+ 1.$$
\end{lemma}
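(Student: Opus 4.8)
The plan is to prove Lemma~\ref{SolnOther'} by a direct lattice‑point count, exactly in the spirit of Lemmas~\ref{EqnSum'} and \ref{Threetotwo'}. Writing $N := a\lfloor b/2\rfloor + b\lfloor a/2\rfloor$ and noting that $z = N-ax-by$ is forced, the number of nonnegative solutions $(x,y,z)$ of $ax+by+z = N$ equals the number of lattice points of the triangle $T := \{(x,y) : x\ge 0,\ y\ge 0,\ ax+by\le N\}$. The key observation is that, by the very definition of $N$, the lattice point $P := (\lfloor b/2\rfloor,\lfloor a/2\rfloor)$ lies exactly on the hypotenuse $ax+by=N$ of $T$; hence the closed rectangle $R := [0,\lfloor b/2\rfloor]\times[0,\lfloor a/2\rfloor]$ is contained in $T$, because $ax+by$ attains its maximum over $R$ at the corner $P$, which lies on $\partial T$.

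Next I would partition the lattice points of $T$ into three classes: those lying in $R$ (that is, $x\le\lfloor b/2\rfloor$ and $y\le\lfloor a/2\rfloor$); the set $L$ of those with $x\ge\lfloor b/2\rfloor+1$; and the set $U$ of those with $y\ge\lfloor a/2\rfloor+1$. These classes are pairwise disjoint, and together they exhaust $T$, since a point with both $x\ge\lfloor b/2\rfloor+1$ and $y\ge\lfloor a/2\rfloor+1$ would satisfy $ax+by\ge N+a+b>N$, which is impossible. The rectangle contributes $(\lfloor a/2\rfloor+1)(\lfloor b/2\rfloor+1)$ lattice points.

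To count $L$, substitute $x=\lfloor b/2\rfloor+x'$; then $ax+by\le N$ becomes $ax'+by\le b\lfloor a/2\rfloor$ with $x'\ge 1$ and $y\ge 0$. The number of nonnegative $(x',y)$ with $ax'+by\le b\lfloor a/2\rfloor$ equals the number of nonnegative solutions of $ax'+by+z=b\lfloor a/2\rfloor$, which Lemma~\ref{EqnSum'} evaluates as $\lfloor a/2\rfloor+1+\sum_{i=1}^{\lfloor a/2\rfloor}\lfloor ib/a\rfloor$; discarding the $\lfloor a/2\rfloor+1$ points with $x'=0$ leaves $|L|=\sum_{i=1}^{\lfloor a/2\rfloor}\lfloor ib/a\rfloor$. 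Applying the same manoeuvre to $U$ (after the shift $y=\lfloor a/2\rfloor+y'$, via Lemma~\ref{EqnSum'} with $a$ and $b$ swapped) gives $|U|=\sum_{i=1}^{\lfloor b/2\rfloor}\lfloor ia/b\rfloor$. Summing the three contributions, the number of solutions equals $(\lfloor a/2\rfloor+1)(\lfloor b/2\rfloor+1)+\sum_{i=1}^{\lfloor a/2\rfloor}\lfloor ib/a\rfloor+\sum_{i=1}^{\lfloor b/2\rfloor}\lfloor ia/b\rfloor$. Finally, expanding $(\lfloor a/2\rfloor+1)(\lfloor b/2\rfloor+1)=\lfloor a/2\rfloor\lfloor b/2\rfloor+\lfloor a/2\rfloor+\lfloor b/2\rfloor+1$ and replacing the product $\lfloor a/2\rfloor\lfloor b/2\rfloor$ by $\sum_{i=1}^{\lfloor a/2\rfloor}\lfloor ib/a\rfloor+\sum_{i=1}^{\lfloor b/2\rfloor}\lfloor ia/b\rfloor$ using Theorem~\ref{Strong} produces exactly the expression claimed in the lemma.

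The main obstacle is the careful bookkeeping of boundary lattice points in the reduction of $L$ and $U$ to Lemma~\ref{EqnSum'}: one must ensure that the slices $x'=0$ (resp.\ $y'=0$) and the edges shared between $R$, $L$ and $U$ are each counted exactly once, so that no lattice point of $T$ is double‑counted or missed. The partition is set up precisely so that this is forced; everything else — locating $P$ on the hypotenuse, verifying disjointness and exhaustiveness, and the concluding algebraic rewrite via Theorem~\ref{Strong} — is routine.
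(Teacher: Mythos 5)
Your lattice-point count is correct, and the decomposition itself is the natural one: the triangle $T=\{(x,y):x,y\ge 0,\ ax+by\le N\}$ splits into the rectangle $R$ and the two flaps $L$ and $U$, and your evaluations $|L|=\sum_{i=1}^{\lfloor a/2\rfloor}\lfloor ib/a\rfloor$ and $|U|=\sum_{i=1}^{\lfloor b/2\rfloor}\lfloor ia/b\rfloor$ via Lemma~\ref{EqnSum'} check out. The one place you diverge from the paper is the rectangle: you count it as $\left(\lfloor a/2\rfloor+1\right)\left(\lfloor b/2\rfloor+1\right)$ and then invoke Theorem~\ref{Strong} to trade $\lfloor a/2\rfloor\lfloor b/2\rfloor$ for the two floor sums. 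Since Theorem~\ref{Strong} is proved independently earlier in the paper, this is a logically valid proof of the lemma in isolation, but it is unusable for the purpose the lemma serves: Lemmas~\ref{EqnSum'}, \ref{Threetotwo'} and \ref{SolnOther'} are the machinery behind Equivalence~B, in particular behind the implication from Theorem~\ref{Sylvester} to Theorem~\ref{Strong}, and if Lemma~\ref{SolnOther'} is itself derived from Theorem~\ref{Strong} that direction collapses into a circle. The paper's route (generalizing \cite[Lemma 13]{Binner}) therefore counts $R$ reciprocity-free, which can be done by splitting $R$ along the line $ax+by=b\lfloor a/2\rfloor$: the points of $R$ on or below that line are exactly the lattice points of the triangle of Lemma~\ref{EqnSum'}, contributing $\lfloor a/2\rfloor+1+\sum_{i=1}^{\lfloor a/2\rfloor}\lfloor ib/a\rfloor$, while the central symmetry $(x,y)\mapsto\left(\lfloor b/2\rfloor-x,\lfloor a/2\rfloor-y\right)$ of $R$ sends $ax+by$ to $N-(ax+by)$ and puts the remaining points in bijection with those satisfying $ax+by<a\lfloor b/2\rfloor$, of which there are $\lfloor b/2\rfloor+\sum_{i=1}^{\lfloor b/2\rfloor}\lfloor ia/b\rfloor$ (Lemma~\ref{EqnSum'} with $a$ and $b$ interchanged, minus the single boundary point $(\lfloor b/2\rfloor,0)$). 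This gives $|R|=\sum_{i=1}^{\lfloor a/2\rfloor}\lfloor ib/a\rfloor+\sum_{i=1}^{\lfloor b/2\rfloor}\lfloor ia/b\rfloor+\lfloor a/2\rfloor+\lfloor b/2\rfloor+1$ directly, and adding $|L|+|U|$ yields the stated total with no appeal to Theorem~\ref{Strong}; you should replace your final algebraic step with such a count (which, compared with your product formula for $|R|$, in fact reproves Theorem~\ref{Strong} rather than consuming it).
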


We are now ready to show the equivalence between Theorems \ref{Sylvester} and \ref{Strong} (Equivalence B in Figure \ref{fig:Equival}). Upon comparing the number of nonnegative integer solutions of the equation $ax+by+z =   a \Big \lfloor\frac{b}{2} \Big \rfloor+ b \Big \lfloor\frac{a}{2} \Big \rfloor$ obtained in Lemmas \ref{Threetotwo'} and \ref{SolnOther'}, and then using Lemma \ref{EqnSum'}, we find 

\begin{equation}
\label{One}
N_0 + 2 \left(\sum_{i=1}^{\left \lfloor\frac{a}{2} \right \rfloor} \Big \lfloor\frac{ib}{a} \Big \rfloor+ \sum_{i=1}^{\left \lfloor\frac{b}{2} \right \rfloor} \Big \lfloor\frac{ia}{b} \Big \rfloor\right)  =  (a-1) \Big \lfloor\frac{b}{2} \Big \rfloor + (b-1) \Big \lfloor\frac{a}{2} \Big \rfloor. 
\end{equation}

By taking three cases based on the parity of $a$ and $b$, it can be easily verified that 

\begin{equation}
\label{Two}
(a-1) \Big \lfloor\frac{b}{2} \Big \rfloor + (b-1) \Big \lfloor\frac{a}{2} \Big \rfloor = \frac{(a-1)(b-1)}{2} + 2 \Big \lfloor\frac{a}{2} \Big \rfloor \Big \lfloor\frac{b}{2} \Big\rfloor.
\end{equation}

From \eqref{One} and \eqref{Two}, we get that $$ N_0 + 2 \left(\sum_{i=1}^{\left \lfloor\frac{a}{2} \right \rfloor} \Big \lfloor\frac{ib}{a} \Big \rfloor+ \sum_{i=1}^{\left \lfloor\frac{b}{2} \right \rfloor} \Big \lfloor\frac{ia}{b} \Big \rfloor\right)  =  \frac{(a-1)(b-1)}{2} + 2 \Big \lfloor\frac{a}{2} \Big \rfloor\Big \lfloor\frac{b}{2} \Big \rfloor. $$

The equivalence between Theorems \ref{Sylvester} and \ref{Strong} (Equivalence B in Figure \ref{fig:Equival}) now readily follows.

\section{Proof of Theorem \ref{Best}}
\label{Sec4}

In this section, we prove Theorem \ref{Best} and the Equivalence C in Figure \ref{fig:Equival}. Recall that for given positive coprime integers $a$ and $b$, and given $k$ such that $0 \leq k < (a-1)(b-1)$, the symbol $N_0(a,b;k)$ denotes the number of natural numbers $\leq k$ that can be expressed in the form $ax+by$ for nonnegative integers $x$ and $y$. 

For positive coprime integers $a$ and $b$ and any natural number $n$, let $N(a,b;n)$ denote the number of nonnegative integer solutions of $ax+by=n$. An exact formula for $N(a,b;n)$ \cite{AT} is known. Further, it is well-known that $N(a,b;n+ab) = N(a,b;n)+1$ (see \cite[Lemma 1]{AT}). Using this fact while generalizing the proof of Lemma \ref{Threetotwo'}, we easily get the following result.

\begin{lemma}
\label{GiN}
Let $a$, $b$, $d$, and $K$ be positive integers such that $b < a$, $\frac{a}{2} < d < a$, $\gcd(a,b)  = 1$, and $K = \Big \lfloor\frac{bd}{a} \Big\rfloor$. The number of nonnegative integer solutions of the equation $$ax + by + z =  bd+aK$$ is equal to  $$ bd+aK+1-\frac{(a-1)(b-1)}{2} + N_0(a,b;bd+aK-ab). $$
\end{lemma}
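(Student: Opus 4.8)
The plan is to reduce the count of nonnegative solutions of $ax+by+z=bd+aK$ to the count of representable numbers up to $bd+aK$, exactly as in the proof of Lemma \ref{Threetotwo'}, but keeping careful track of the fact that $bd+aK$ can be much larger than the Frobenius number $ab-a-b$.

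First I would observe that the number of nonnegative integer solutions $(x,y,z)$ of $ax+by+z=M$ (for $M=bd+aK$) equals $\sum_{n=0}^{M} N(a,b;n)$, since for each value $n$ of $ax+by$ with $0\le n\le M$ there is exactly one admissible $z=M-n\ge 0$. So the claim is that
\begin{equation*}
\sum_{n=0}^{M} N(a,b;n) \;=\; M+1-\frac{(a-1)(b-1)}{2}+N_0(a,b;M-ab).
\end{equation*}
Next I would split the sum at $n=ab$, writing $\sum_{n=0}^{M}N(a,b;n)=\sum_{n=0}^{ab-1}N(a,b;n)+\sum_{n=ab}^{M}N(a,b;n)$. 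For the first block, $N(a,b;n)\in\{0,1\}$ for $0\le n<ab$ (two distinct representations would differ by a multiple of $ab$), and $N(a,b;n)=1$ precisely when $n$ is representable; the number of representable $n$ in $[0,ab-1]$ is $ab-\frac{(a-1)(b-1)}{2}$ by Sylvester's Theorem (Theorem \ref{Sylvester}), since every nonrepresentable number lies in $[0,ab-a-b]\subset[0,ab-1]$. For the second block I would use the shift identity $N(a,b;n+ab)=N(a,b;n)+1$ from \cite[Lemma 1]{AT}: reindexing $n=m+ab$ with $0\le m\le M-ab$ gives $\sum_{m=0}^{M-ab}\bigl(N(a,b;m)+1\bigr)=(M-ab+1)+\sum_{m=0}^{M-ab}N(a,b;m)$.

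The remaining piece $\sum_{m=0}^{M-ab}N(a,b;m)$ is itself the number of solutions of $ax+by+z=M-ab$, and I would argue that it equals $(M-ab+1)-\bigl[(\text{number of nonrepresentable }m\le M-ab)\bigr]+\bigl[\text{extra from }m\ge ab\bigr]$ — but more cleanly, I would simply note that $\sum_{m=0}^{M-ab}N(a,b;m)=\sum_{m=0}^{M-ab}\mathbf 1[m\text{ representable}]+\sum_{m=0}^{M-ab}\bigl(N(a,b;m)-\mathbf 1[m\text{ rep.}]\bigr)$, where the first term is $N_0(a,b;M-ab)$ by definition (here one uses $0\le M-ab<(a-1)(b-1)$, which follows from $\frac a2<d<a$ and $K=\lfloor bd/a\rfloor$ so that $M=bd+aK<2ab$ and $M\ge ab$ forces $M-ab<ab-a-b+\dots$; this inequality needs to be checked but is routine). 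Assembling the three contributions and cancelling $ab$ against $\frac{(a-1)(b-1)}{2}$-style terms via $ab-\frac{(a-1)(b-1)}{2}-ab=-\frac{(a-1)(b-1)}{2}$ yields the stated formula.

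The main obstacle is the bookkeeping in the range where $N(a,b;m)$ can exceed $1$, i.e. controlling the "extra from $m\ge ab$" terms so that they telescope correctly; the cleanest route is to iterate the shift identity rather than treat it in one step, writing $M$ in the form $ab\cdot q+r$ with $0\le r<ab$ and peeling off multiples of $ab$ one at a time, each peel contributing a predictable linear term. The second thing to verify carefully is the bound $0\le bd+aK-ab<(a-1)(b-1)$ so that $N_0(a,b;M-ab)$ is well-defined in the sense of Question \ref{NewSyl}; this is where the hypothesis $\frac a2<d<a$ is used, and it parallels exactly the role $d<a$ plays in Theorem \ref{GenReci}.
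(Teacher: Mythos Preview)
Your approach is essentially the paper's: write the solution count as $\sum_{n=0}^{M}N(a,b;n)$ with $M=bd+aK$, split at $n=ab$, apply the shift identity $N(a,b;n+ab)=N(a,b;n)+1$, and identify the residual sum $\sum_{m=0}^{M-ab}N(a,b;m)$ with $N_0(a,b;M-ab)$ because $M-ab<ab$. That is exactly the generalization of Lemma~\ref{Threetotwo'} the paper has in mind.

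One point you flagged as ``routine'' is actually false as stated: the lower bound $0\le bd+aK-ab$ need \emph{not} hold. For instance with $a=5$, $b=3$, $d=3$ one gets $K=\lfloor 9/5\rfloor=1$ and $M=14<ab=15$, so $M-ab=-1$. Your split tacitly assumes $M\ge ab$. The repair is immediate: when $M<ab$ the second block is empty and $\sum_{n=0}^{M}N(a,b;n)$ is just the number of representable integers in $[0,M]$. Since $aK>bd-a$ gives $M>2bd-a\ge ab+b-a>ab-a-b$ (using $2d\ge a+1$), every nonrepresentable integer lies below $M$, so this count is $(M+1)-\frac{(a-1)(b-1)}{2}$, and $N_0(a,b;M-ab)=0$ by the paper's convention for negative arguments---matching the claimed formula. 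The \emph{upper} bound you worried about does hold: $d\le a-1$ forces $K\le b-1$, hence $M\le b(a-1)+a(b-1)=2ab-a-b$, so $M-ab\le ab-a-b<(a-1)(b-1)$, and in particular $M-ab<ab$, so $N(a,b;m)\in\{0,1\}$ on the residual range and no further iteration of the shift identity is needed.
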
 

 Generalizing the proof of Lemma \ref{SolnOther'} and then using Theorem \ref{GenReci}, we get the following result.

\begin{lemma}
\label{GeN}
 Let $a$, $b$, $d$, and $K$ be positive integers such that $b < a$, $d < a$, $\gcd(a,b)  = 1$, and $K = \Big \lfloor\frac{bd}{a} \Big\rfloor$. The number of nonnegative integer solutions $(x,y,z)$ of the equation $$ax+by+z = bd+aK$$ is equal to $$2\left(\sum_{i=1}^{d} \left\lfloor \frac{ib}{a} \right\rfloor + \sum_{i=1}^{K} \left\lfloor \frac{ia}{b} \right\rfloor \right)+d+K+1. $$
\end{lemma}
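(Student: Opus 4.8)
The plan is to count the nonnegative integer solutions of $ax+by+z=bd+aK$ by a four‑way region decomposition, generalizing the proof of Lemma \ref{SolnOther'}, and then to collapse the resulting expression using Theorem \ref{GenReci}. Write $N=bd+aK$. Since $z=N-ax-by$ is forced once $(x,y)$ is chosen, the number of solutions equals the number of pairs $(x,y)$ of nonnegative integers with $ax+by\leq N$.

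I would partition such pairs according to whether $x\leq K$ or $x>K$, and whether $y\leq d$ or $y>d$. If $x\leq K$ and $y\leq d$, then $ax+by\leq aK+bd=N$ holds automatically, so this class is exactly the lattice rectangle $[0,K]\times[0,d]$ and contributes $(K+1)(d+1)$ pairs. If $x>K$ and $y>d$, then $ax+by\geq a(K+1)+b(d+1)=N+a+b>N$, so this class is empty. For the class $x\leq K$, $y>d$, put $x'=K-x$ and $y'=y-d$, so that $0\leq x'\leq K$ and $y'\geq 1$; the inequality $ax+by\leq N$ becomes $by'\leq ax'$, that is, $1\leq y'\leq\lfloor ax'/b\rfloor$, and summing over $x'=0,1,\dots,K$ gives $\sum_{i=1}^{K}\lfloor ia/b\rfloor$ pairs. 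Symmetrically, with $x''=x-K$ and $y''=d-y$, the class $x>K$, $y\leq d$ contributes $\sum_{i=1}^{d}\lfloor ib/a\rfloor$ pairs. Adding the four contributions, the number of solutions equals $(K+1)(d+1)+\sum_{i=1}^{d}\lfloor ib/a\rfloor+\sum_{i=1}^{K}\lfloor ia/b\rfloor$.

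Finally, I would invoke Theorem \ref{GenReci} for the present $a,b,d,K$, whose hypotheses ($b<a$, $d<a$, $\gcd(a,b)=1$, $K=\lfloor bd/a\rfloor$) coincide with those assumed here; it yields $\sum_{i=1}^{d}\lfloor ib/a\rfloor+\sum_{i=1}^{K}\lfloor ia/b\rfloor=dK$. Hence $(K+1)(d+1)=dK+d+K+1=\bigl(\sum_{i=1}^{d}\lfloor ib/a\rfloor+\sum_{i=1}^{K}\lfloor ia/b\rfloor\bigr)+d+K+1$, and substituting this into the count above gives the claimed value $2\bigl(\sum_{i=1}^{d}\lfloor ib/a\rfloor+\sum_{i=1}^{K}\lfloor ia/b\rfloor\bigr)+d+K+1$. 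The computation is essentially routine; the only thing needing care is the reindexing in the two ``triangular'' classes and the check that the equivalence of $ax+by\leq N$ with $1\leq y'\leq\lfloor ax'/b\rfloor$ (resp.\ with $1\leq x''\leq\lfloor by''/a\rfloor$) is tight, which is immediate because $y'$ and $x''$ are integers. Notably, the hypothesis $K=\lfloor bd/a\rfloor$ is not needed for the decomposition itself — the formula $(K+1)(d+1)+\sum_{i=1}^{d}\lfloor ib/a\rfloor+\sum_{i=1}^{K}\lfloor ia/b\rfloor$ holds for all coprime $a,b$ and all positive $d,K$ — and enters only at the last step, where it is exactly what licenses the application of Theorem \ref{GenReci}.
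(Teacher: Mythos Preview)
Your argument is correct and matches the paper's own route: the paper states only that the result follows by ``generalizing the proof of Lemma~\ref{SolnOther'} and then using Theorem~\ref{GenReci},'' and your four-region lattice-point count centered at $(K,d)$, followed by the substitution $dK=\sum_{i=1}^{d}\lfloor ib/a\rfloor+\sum_{i=1}^{K}\lfloor ia/b\rfloor$ from Theorem~\ref{GenReci}, is exactly that generalization carried out in detail. Your closing remark that the intermediate identity $(K+1)(d+1)+\sum_{i=1}^{d}\lfloor ib/a\rfloor+\sum_{i=1}^{K}\lfloor ia/b\rfloor$ holds without the hypothesis $K=\lfloor bd/a\rfloor$ is a correct and worthwhile observation.
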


The following lemma immediately follows from Lemmas \ref{GiN} and \ref{GeN}. 

\begin{lemma}
\label{Best1}
Let $a$, $b$, $d$, and $K$ be positive integers such that $b < a$, $\frac{a}{2} < d < a$, $\gcd(a,b)  = 1$, and $K = \Big \lfloor\frac{bd}{a} \Big\rfloor$. Then $$ N_0(a,b;bd+aK-ab) = 2\left(\sum_{i=1}^{d} \left\lfloor \frac{ib}{a} \right\rfloor + \sum_{i=1}^{K} \left\lfloor \frac{ia}{b} \right\rfloor - dK\right) + \frac{(2d-a+1)(2K-b+1)}{2}. $$
\end{lemma}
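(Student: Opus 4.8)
The plan is to combine Lemmas \ref{GiN} and \ref{GeN} directly, since both express the number of nonnegative integer solutions $(x,y,z)$ of the same equation $ax+by+z = bd+aK$. Note that the hypothesis $\frac{a}{2} < d < a$ in Lemma \ref{Best1} ensures that the hypotheses of both earlier lemmas are satisfied (Lemma \ref{GiN} requires $\frac{a}{2} < d < a$, while Lemma \ref{GeN} only needs $d < a$), so we may legitimately equate the two expressions. First I would set the formula from Lemma \ref{GiN}, namely $bd+aK+1-\frac{(a-1)(b-1)}{2} + N_0(a,b;bd+aK-ab)$, equal to the formula from Lemma \ref{GeN}, namely $2\left(\sum_{i=1}^{d} \left\lfloor \frac{ib}{a} \right\rfloor + \sum_{i=1}^{K} \left\lfloor \frac{ia}{b} \right\rfloor \right)+d+K+1$.

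Next I would solve this identity for $N_0(a,b;bd+aK-ab)$, obtaining
$$ N_0(a,b;bd+aK-ab) = 2\left(\sum_{i=1}^{d} \left\lfloor \frac{ib}{a} \right\rfloor + \sum_{i=1}^{K} \left\lfloor \frac{ia}{b} \right\rfloor\right) + d + K - bd - aK + \frac{(a-1)(b-1)}{2}. $$
It then remains to check the purely algebraic identity
$$ d + K - bd - aK + \frac{(a-1)(b-1)}{2} = -2dK + \frac{(2d-a+1)(2K-b+1)}{2}, $$
which rearranges the non-sum terms into the form claimed in the statement. Expanding the right-hand side gives $\frac{1}{2}\bigl(4dK - 2db + 2d - 2aK + ab - a - 2K + b - 1\bigr) - 2dK$, and one verifies term by term that this equals $d + K - bd - aK + \frac{(a-1)(b-1)}{2}$. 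This is a routine expansion with no subtlety.

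There is essentially no hard step here: the lemma is a bookkeeping consequence of the two preceding counting lemmas, and the only thing to be careful about is confirming that the hypothesis $\frac{a}{2} < d < a$ is exactly what is needed to invoke both Lemma \ref{GiN} and Lemma \ref{GeN} simultaneously, and that the argument $bd+aK-ab$ of $N_0$ is handled consistently (when it is negative, the convention $N_0(a,b;k)=0$ for $k<0$ keeps the identity meaningful, matching the discussion following Theorem \ref{Best}). So the proof is simply: equate the two solution counts, isolate $N_0$, and simplify the elementary constant terms.
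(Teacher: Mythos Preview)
Your approach is exactly the paper's: it states that Lemma \ref{Best1} ``immediately follows from Lemmas \ref{GiN} and \ref{GeN}'', i.e., equate the two solution counts, isolate $N_0$, and simplify. One small slip: in your displayed expansion of $(2d-a+1)(2K-b+1)$ the last three terms should be $+2K-b+1$, not $-2K+b-1$; with that correction the identity checks.
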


Using Lemma \ref{Best1}, it is clear that Theorem \ref{GenReci} is equivalent to the following theorem.

\begin{theorem}
\label{Best2}
Let $a$, $b$, $d$, and $K$ be positive integers such that $b < a$, $\frac{a}{2} < d < a$, $\gcd(a,b)  = 1$, and $K = \Big \lfloor\frac{bd}{a} \Big\rfloor$. Then $$ N_0(a,b;bd+aK-ab) = \frac{(2d-a+1)(2K-b+1)}{2}. $$
\end{theorem}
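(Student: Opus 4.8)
The plan is to derive Theorem~\ref{Best2} as an essentially one-line consequence of Lemma~\ref{Best1} and Theorem~\ref{GenReci}; all of the substantive counting has already been packaged into Lemmas~\ref{GiN}--\ref{Best1}. Lemma~\ref{Best1} asserts that, under the very hypotheses of Theorem~\ref{Best2},
\[
N_0(a,b;bd+aK-ab) = 2\left(\sum_{i=1}^{d}\left\lfloor\frac{ib}{a}\right\rfloor + \sum_{i=1}^{K}\left\lfloor\frac{ia}{b}\right\rfloor - dK\right) + \frac{(2d-a+1)(2K-b+1)}{2},
\]
so Theorem~\ref{Best2} is precisely the assertion that the parenthesised ``error term'' vanishes.

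To finish, I would observe that the hypotheses of Theorem~\ref{GenReci} -- namely $b<a$, $d<a$, $\gcd(a,b)=1$, and $K=\lfloor bd/a\rfloor$ -- are all already assumed in Theorem~\ref{Best2} (the extra condition $d>a/2$ is not needed for this step, but it is needed so that Lemma~\ref{GiN}, and hence Lemma~\ref{Best1}, applies). Theorem~\ref{GenReci} then gives $\sum_{i=1}^{d}\lfloor ib/a\rfloor + \sum_{i=1}^{K}\lfloor ia/b\rfloor = dK$, the error term is $0$, and the claimed value of $N_0$ drops out. I do not expect any genuine obstacle in this direction; the one point worth a sentence is the range of $k=bd+aK-ab$: from $d\le a-1$ we get $bd\le ab-b$ and $K\le b-1$, hence $aK\le ab-a$ and $k\le ab-a-b<(a-1)(b-1)$, so $N_0(a,b;k)$ is the quantity of Question~\ref{NewSyl}; and since that quantity is nonnegative while the right-hand side equals it, one automatically has $2K-b+1\ge 0$, with the boundary value $2K-b+1=0$ making the formula evaluate to $0$, in agreement with the convention $N_0(a,b;k)=0$ for $k<0$.

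For completeness I would also record two companion points. First, Theorem~\ref{Best} is just Theorem~\ref{Best2} rewritten: putting $\alpha=2d-a$ and $\beta=2K-b$ gives $d=\tfrac{\alpha+a}{2}$, $K=\tfrac{\beta+b}{2}$, $bd+aK-ab=\tfrac{b\alpha+a\beta}{2}$, $\beta=2\left\lfloor\tfrac{b(\alpha+a)}{2a}\right\rfloor-b$, and $\tfrac{(2d-a+1)(2K-b+1)}{2}=\tfrac{(\alpha+1)(\beta+1)}{2}$, while $\tfrac a2<d<a$ corresponds exactly to $0<\alpha<a$ with $\alpha\equiv a\pmod 2$. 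Second, for Equivalence~C of Figure~\ref{fig:Equival} I would reverse the argument: assuming Theorem~\ref{Best2}, Lemma~\ref{Best1} forces the error term to vanish, i.e.\ recovers Theorem~\ref{GenReci} on the range $\tfrac a2<d<a$; the complementary range $0<d\le\tfrac a2$ then follows by applying this with $d\mapsto a-d$, using the reflection identity $\lfloor ib/a\rfloor+\lfloor(a-i)b/a\rfloor=b-1$ (valid since $a\nmid ib$ for $1\le i\le a-1$) and $\sum_{i=1}^{a-1}\lfloor ib/a\rfloor=\tfrac{(a-1)(b-1)}{2}$.
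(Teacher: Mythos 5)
Your proposal is correct and follows exactly the paper's own route: the paper deduces Theorem~\ref{Best2} from Lemma~\ref{Best1} by invoking Theorem~\ref{GenReci} to make the parenthesised term vanish, which is precisely your one-line argument (and your reversal of the implication is likewise how the paper obtains Equivalence~C). The additional remarks on the range of $k$ and the change of variables $\alpha=2d-a$, $\beta=2K-b$ match the paper's passage from Theorem~\ref{Best2} to Theorem~\ref{Best}.
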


Observe that Theorem \ref{Best} is just another version of Theorem \ref{Best2} obtained by setting $\alpha = 2d-a$ and $\beta = 2K-b$ in Theorem \ref{Best2}. This completes the proof of Theorem \ref{Best} and its equivalence with Theorem \ref{GenReci} (Equivalence C in Figure \ref{fig:Equival}).

\section{$N_0(a,b;k)$ for other values of $k$}
\label{Otherk}

In this section, we describe an easy method to calculate $N_0(a,b;k)$ for other values of $k$ not covered by Theorem \ref{Best}. It is well-known that for $l < ab$, the equation $ax+by=l$ has at most one solution \cite[Lemma 2 and Lemma 4]{AT}. Using this fact, it is easy to see that for $k < ab$, $N_0(a,b;k)$ is equal to the number of nonnegative integer solutions of $ax+by+z=k$, which can be easily calculated using the algorithm described in \cite[Section 2.3]{Binner}. For example, suppose we want to calculate $N_0(29,23;257)$. Then, by \cite[Theorem 5]{Binner}, we get that $$ N_0(29,23;257) = 15 + \sum_{i=1}^8 \left \lfloor \frac{23i}{29} \right \rfloor + \sum_{i=1}^{18} \left \lfloor \frac{4i}{23} \right \rfloor . $$ By repeated applications of Theorem \ref{GenReci} and the division algorithm, as described in \cite[Section 2.3]{Binner}, we easily get that $\sum_{i=1}^8 \left \lfloor \frac{23i}{29} \right \rfloor = 24$ and $\sum_{i=1}^{18} \left \lfloor \frac{4i}{23} \right \rfloor = 21$, and thus $N_0(29,23;257) = 60$.

 \section{Proof of Lemma \ref{GE2}}
 \label{Sec5}  
  \begin{proof}[Proof of Lemma \ref{GE2}]
   For brevity of notation, let $r(m)$ denote the remainder when $m$ is divided by $a$. Since $a$, $b$ and $c$ are odd, we get 
  \begin{align}
  \label{E1}
   \sum_{i=1}^{\frac{a-1}{2}} \left \lfloor \frac{ibc}{a}  \right \rfloor &\equiv \sum_{i=1}^{\frac{a-1}{2}} a \left \lfloor \frac{ibc}{a}  \right \rfloor \pmod 2 \notag \\
   &= \sum_{i=1}^{\frac{a-1}{2}} ibc- r(ibc) \notag \\
   &\equiv \sum_{i=1}^{\frac{a-1}{2}} i +  \sum_{i=1}^{\frac{a-1}{2}} r(ibc) \pmod 2.
   \end{align}
   Next, we study the latter sum. Let $c_i$ denote the remainder when $ic$ is divided by $a$. Note that 
    \begin{align*}
    c_i &= ic - a \left \lfloor \frac{ic}{a} \right \rfloor \notag \\
    &\equiv i - \left \lfloor \frac{ic}{a} \right \rfloor \pmod 2. 
    \end{align*}
 Therefore, 
 \begin{align}
 \label{E2}
  r(ibc) &= r(bc_i) \notag \\
   &= bc_i - a \left \lfloor \frac{bc_i}{a} \right \rfloor \notag \\
   &\equiv c_i -  \left \lfloor \frac{bc_i}{a} \right \rfloor  \pmod 2 \notag \\
   &\equiv i - \left \lfloor \frac{ic}{a} \right \rfloor - \left \lfloor \frac{bc_i}{a} \right \rfloor \pmod 2. 
  \end{align}
   Thus, from \eqref{E1} and \eqref{E2}, we get that 
   \begin{equation*}
   \label{E3}
   \sum_{i=1}^{\frac{a-1}{2}} \left \lfloor \frac{ibc}{a}  \right \rfloor  \equiv  \sum_{i=1}^{\frac{a-1}{2}}  \left \lfloor \frac{ic}{a} \right \rfloor + \sum_{i=1}^{\frac{a-1}{2}} \left \lfloor \frac{bc_i}{a} \right \rfloor \pmod 2.
   \end{equation*}
   Therefore, to complete the proof of the lemma, it suffices to show that 
   \begin{equation}
   \label{E4}
    \sum_{i=1}^{\frac{a-1}{2}} \left \lfloor \frac{bc_i}{a} \right \rfloor \equiv \sum_{i=1}^{\frac{a-1}{2}} \left \lfloor \frac{ib}{a} \right \rfloor  \pmod 2. 
    \end{equation}
   Let $d_i = \min(c_i,a-c_i)$. Note that $$ \left \lfloor \frac{b(a-c_i)}{a} \right \rfloor = b-1- \left \lfloor \frac{bc_i}{a} \right \rfloor \equiv \left \lfloor \frac{bc_i}{a} \right \rfloor \pmod 2.$$ Therefore, for each $i$, $$  \left \lfloor \frac{bd_i}{a} \right \rfloor \equiv  \left \lfloor \frac{bc_i}{a} \right \rfloor \pmod 2. $$ Moreover, note that as $i$ varies from $1$ to $\frac{a-1}{2}$, so does $d_i$. Therefore, 
   \begin{align*}
 \sum_{i=1}^{\frac{a-1}{2}} \left \lfloor \frac{bc_i}{a} \right \rfloor &\equiv \sum_{i=1}^{\frac{a-1}{2}} \left \lfloor \frac{bd_i}{a} \right \rfloor \pmod 2 \\
 & = \sum_{i=1}^{\frac{a-1}{2}} \left \lfloor \frac{ib}{a} \right \rfloor \pmod 2,
   \end{align*}
   completing the proof of \eqref{E4}, and thus of Lemma \ref{GE2}. 
     \end{proof}

  \section{Acknowledgement}
\label{Acn}

I want to thank the anonymous referee for some excellent insights, especially regarding Jacobian quadratic reciprocity. I also wish to express my gratitude to  A. Rattan at SFU for some very helpful suggestions on the presentation of this paper. Finally, I want to thank the Maths Department at SFU for providing me various awards and fellowships which help me conduct my research.

\noindent 
2010 \emph{Mathematics Subject Classification}:~Primary 11D45;
Secondary  11A15, 11A07, 11D04, 05A15.

\medskip

\noindent 
\emph{Keywords}:  Frobenius coin problem,
nonrespresentable number,
Sylvester's result,
equivalence,
reciprocity relation,
floor function summation,
Gauss' Lemma for Jacobi symbol,
Eisenstein's Lemma for Jacobi symbol,
quadratic reciprocity for Jacobi symbol,
Frobenius number.

\end{document}